\numberwithin{equation}{section}
\theoremstyle{plain}
\newtheorem{theorem}{Theorem}[section]
\newtheorem{proposition}[theorem]{Proposition}
\newtheorem*{proposition*}{Proposition}
\newtheorem{lemma}[theorem]{Lemma}
\newtheorem{lemmma*}{Lemma}
\theoremstyle{definition}
\newtheorem{definition}[theorem]{Definition}
\newtheorem*{definition*}{Definition}
\newtheorem{remark*}{Remark}
\newcommand{\Irr}{\mathrm{Irr}}
\newcommand{\des}{\mathrm{des}}
\newcommand{\CF}{\mathrm{CF}}
\newcommand\M{{\sf M}}
\newcommand\emptysq{[\,\text{-}\, , \,\text{-}\,]}
\newcommand\lcm{\mathrm{lcm}}
\newcommand{\ch}{\mathrm{ch}}
\renewcommand{\P}{\mathcal P}
\newcommand{\FLatt}{\mathcal X}
\newcommand{\LLatt}{\mathcal Y}
\newcommand{\CLatt}{\mathcal Z}
\newcommand{\FCone}{\mathcal F}
\newcommand{\LCone}{\mathcal L}
\renewcommand{\P}{\mathcal P}
\renewcommand{\a}{\mathfrak a}
\newcommand{\D}{\mathscr D}
\newcommand{\E}{\mathscr E}
\newcommand{\A}{\mathcal A}
\newcommand{\fc}{\phi}
\newcommand{\tc}{\gamma}
\newcommand{\cc}{\psi}
\newcommand{\lc}{\omega}
\newcommand{\ec}{\varepsilon}
\newcommand{\thc}{\theta}
\newcommand{\NN}{\mathbb N}
\newcommand{\ZZ}{\mathbb Z}
\newcommand{\QQ}{\mathbb Q}
\newcommand{\CC}{\mathbb C}
\begin{document}
\title[The characters of $S_n$ that depend only on length]{The characters of symmetric groups that depend only on length}
\author[Alexander Rossi Miller]{Alexander Rossi Miller}
\email{alexander.r.miller@univie.ac.at}
\begin{abstract}
  The characters $\chi(\pi)$ of $S_n$ that depend only on
  the number of cycles of $\pi$ are completely determined. 
\end{abstract}
\maketitle
\thispagestyle{empty}

\section{Introduction}
Let $\ell(\pi)$ denote the number of cycles of a permutation $\pi\in S_n$. 
Let $\fc_0,\fc_1,\ldots,\fc_{n-1}$ be the Foulkes characters of $S_n$,
so $\fc_i$ is afforded by the sum of Specht modules $V_\beta$ with 
$\beta$ of border shape with exactly $n$ boxes and $i+1$ rows.
For history, properties, and a number of recent 
developments, 
including
the Diaconis--Fulman connection with adding random numbers,
see \cite{DF1,DF2,KT,M1,M2,M3,M4,M5}.

The
Foulkes characters have
long been studied for their
remarkable properties:
\begin{enumerate}[\it a)\ ]
\item
  They depend only on length in the sense that
  \[
    \fc_i(\pi)=\fc_i(\sigma)\ \text{whenever}\ \ell(\pi)=\ell(\sigma).
  \]
\item
  They form a basis for the space $\CF_\ell(S_n)$ of
  class functions of $S_n$ that depend only on $\ell$,
  with each $\thc\in\CF_\ell(S_n)$
  decomposing uniquely~as
  \[
    \thc
    =\sum_{i=0}^{n-1} \frac{\langle \thc,\ec_i\rangle}{\ec_i(1)}\fc_i,
  \]
  where $\ec_i$ is the irreducible character $\chi_{(n-i,1,\ldots,1)}$.
\item
  They decompose the character $\rho$ of the regular representation:
  \[\fc_0+\fc_1+\ldots+\fc_{n-1}=\rho.\]
\item
  Their degrees are the Eulerian numbers:
  \[
    \fc_i(1)=\#\{\pi\in S_n \mid \des(\pi)=i\},\quad
    \des(\pi)=\#\{i \mid \pi(i)>\pi(i+1)\}.
  \]
\item
  They branch according to
  \[
    \fc_i|_{S_{n-1}}=(n-i)\fc_{i-1}+(i+1)\fc_i,\ \ \text{where\ \ $\fc_{-1}=0$}.
  \]
\item
  And they even admit an explicit expression:
  \[
    \fc_i(\pi)=\sum_{j=0}^{n-1}(-1)^{i-j}\binom{n+1}{i-j}(j+1)^{\ell(\pi)},
  \]
  with the usual convention, used throughout this paper, that $\binom{u}{v}=0$ if $v$ does not satisfy $u\geq v\geq 0$.
\end{enumerate}

Two missing properties were recently added to the list in \cite{M5}, 
the first being a solution to the problem of decomposing products:
\begin{enumerate}[\it g)\ ]
\item For any two Foulkes characters $\fc_i$ and $\fc_j$ of $S_n$, 
  \[
    \fc_i\fc_j=\sum_{k=0}^{n-1} c_{ijk}  \fc_k,
  \]
  where 
  \[
    c_{ijk}=
    \#\{(x,y)\in S_n\times S_n \mid \des(x)=i,\ \des(y)=j,\ xy=z\}
  \]
for any fixed $z\in S_n$ with $\des(z)=k$, or more explicitly, 
  \[
    c_{ijk}=\sum_{{0\leq u\leq i}\atop{0\leq v\leq j}}
    (-1)^{i-u+j-v}\binom{n+1}{i-u}\binom{n+1}{j-v}\binom{uv+u+v+n-k}{n}.
  \]
\end{enumerate}
In addition to the combinatorial solution and the explicit solution presented here, a recursive solution was also found by specializing results  
of Delsarte that predate the introduction of Foulkes characters and were discovered in a completely different context, see \cite{M5}.
The combinatorial solution 
follows from an earlier result \cite[Thm.\ 9]{M1}
that connects
Foulkes characters with Eulerian
idempotents from cyclic homology, namely that
\[
  \D_i = \sum_{j=1}^{n}\fc_i(C_{j})\E_{j-1},
\]
where $\fc_i(C_k)$ denotes the value $\fc_i(\sigma)$ at any 
$\sigma\in C_k=\{\pi\in S_n \mid \ell(\pi)=k\}$,
\[
  \D_i=\sum_{{\pi\in S_n}\atop {\des(\pi)=i}}\pi,
\]
and the $\E_i$'s are the Eulerian idempotents, which are certain orthogonal idempotents in $\QQ[S_n]$, see \cite[(2.5)]{M5}.

The second property added to the list in \cite{M5}
solves the problem of finding a natural description of
the unique inner product on $\CF_\ell(S_n)$ with respect to which the 
$\fc_i$'s form an orthonormal basis:

\begin{enumerate}[\it h)\ ]
\item
  The Foulkes characters $\fc_0,\fc_1,\ldots,\fc_{n-1}$ of $S_n$ form
  an orthonormal basis for the Hilbert space $\CF_\ell(S_n)$ with
  inner product $\emptysq$ defined by 
  \[[\thc,\psi ]=\frac{1}{|S_n|}\sum_{i,j=1}^n \thc(C_i)\overline{\psi(C_j)}\mathbf E |\sigma C_i\cap \tau C_j|,\]
  where $\sigma$ and $\tau$ are $n$-cycles chosen uniformly at random from $C_1$,
  so $\mathbf E |\sigma C_i\cap \tau C_j|$ equals the expected number of ways that the product $\sigma\tau$ 
  can be written as a product $\alpha\beta$ with $\alpha\in C_i$ and $\beta\in C_j$. 
\end{enumerate}
This inner product $\emptysq$ gives a new and natural construction of Foulkes characters by applying the Gram--Schmidt process to a natural choice of basis for $\CF_\ell(S_n)$ that is composed of characters, analogous to how the irreducible characters themselves can be constructed by applying the Gram--Schmidt process to the characters $(1_{S_\lambda})^{S_n}$, see \cite{M5}.

The author gave a different construction of Foulkes characters in \cite{M1} using 
sums of modules afforded by reduced homology groups of
subcomplexes of certain equivariant strong deformation retracts of
wedges of spheres called Milnor fibers coming from invariant theory, which
works not only for ${S_n\cong G(1,1,n)}$ but for all of the full monomial groups $G(r,1,n)$,  
including the hyperoctahedral group $G(2,1,n)$.
In this more general setting, the role of $\ell$ is played by $n-\mathfrak l$, where
$\mathfrak l$ is the most natural choice of ``length'',
\[\mathfrak l(x)=\min\{k\geq 0 \mid x=y_1y_2\ldots y_k\ \text{for some reflections }y_i\in G(r,1,n)\}.\]
In addition to new properties and proofs in the classical case,
analogues of all the properties that we have described so far have
been established for $G(r,1,n)$. These generalized Foulkes characters
also have connections with certain Markov chains, just as in the case of $S_n$.
Most notably, Diaconis and Fulman \cite{DF2}
connected the hyperoctahedral Foulkes characters 
with a Markov chain for adding random numbers
in balanced ternary, a number system that reduces carries and
that Donald Knuth famously described as 
 ``perhaps the prettiest number system of all.''

But what is missing 
from the list of properties presented so far is 
an analogue of the fundamental characterization of characters,
by which we mean if $\chi_0,\chi_1,\ldots,\chi_{k-1}$
are the irreducible characters of a finite group~$G$, then
the characters of $G$ are the non-negative integer linear combinations
\[\a_0\chi_0+\a_1\chi_1+\ldots+\a_{k-1}\chi_{k-1},\quad
  \a=(\a_0,\a_1,\ldots,\a_{k-1})^t \in\NN^k,\]
where by $\NN$ we mean the set of non-negative integers.
Interestingly, for the hyperoctahedral groups and all the other full monomial groups with $r\geq 2$, there is an analogue of this characterization~\cite{M2}:
\begin{enumerate}[\it{i\,$'$)}\ ]
\item 
If $r\geq 2$,
then the characters of $G(r,1,n)$ that depend only on length are precisely the
non-negative integer linear combinations of the Foulkes characters of the group.
\end{enumerate}
So for the monomial groups $G(r,1,n)$ with $r\geq 2$, the picture is complete.

For the symmetric group, however, 
the problem of determining the characters that depend only on $\ell$  is more complicated and
has remained open for a few decades. 
Already for $n=3$, one finds that not all characters in $\CF_\ell(S_n)$ are integer linear combinations of Foulkes characters, for example $\chi_{(2,1)}$, 
and it was recently shown in~\cite{M2}, using symmetric functions and Chebyshev's theorem about primes, that the same is true for each $n\geq 3$, but little else is known about the characters in $\CF_\ell(S_n)$.

Given this gap in the story for $S_n$, a natural question to ask~is, instead of the
Foulkes characters, does 
there exist a different sequence of $n$ characters that is better suited for the role of 
irreducibles in $\CF_\ell(S_n)$ 
 in that their non-negative integer linear combinations are
the characters that depend only on length?
Our first theorem answers in the negative.

\begin{theorem}\label{theorem no better}
  If $n>3$, then there does not exist a collection of $n$ characters 
  $\chi_0,\chi_1,\ldots,\chi_{n-1}\in\CF_\ell(S_n)$ such that every character of $S_n$ that
  depends only on $\ell$ can be written as a non-negative integer linear combination of the $\chi_i$'s. 
\end{theorem}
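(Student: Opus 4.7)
The plan is to argue by contradiction. Suppose such characters $\chi_0,\ldots,\chi_{n-1}$ exist; write $M$ for the monoid of all characters in $\CF_\ell(S_n)$. Since the Foulkes characters $\fc_0,\ldots,\fc_{n-1}$ lie in $M$ and form a $\QQ$-basis of $\CF_\ell(S_n)$, any $n$ characters that generate $M$ under $\NN$-combinations must themselves be a $\QQ$-basis. Moreover, $1=\fc_0$ and $\text{sgn}=\fc_{n-1}$ are degree-$1$ characters in $M$, and a degree-$1$ character cannot be a sum of two nonzero characters, so each must equal some $\chi_j$. Relabel so that $\chi_0=1$ and $\chi_1=\text{sgn}$.

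Next I would show that $\chi_j$ lies in the submonoid $M_0$ of characters in $M$ that contain neither $1$ nor $\text{sgn}$, for every $j\geq 2$. Writing $\fc_i=\sum_j a_{ij}\chi_j$ with $a_{ij}\in\NN$ for $1\leq i\leq n-2$, and using $\langle\fc_i,1\rangle=\langle\fc_i,\text{sgn}\rangle=0$ together with $a_{ij},\langle\chi_j,1\rangle,\langle\chi_j,\text{sgn}\rangle\geq 0$, any $\chi_j$ appearing in such an $\fc_i$ must satisfy $\langle\chi_j,1\rangle=\langle\chi_j,\text{sgn}\rangle=0$. The basis property forces each $\chi_j$ with $j\geq 2$ to appear in at least one $\fc_i$ with $1\leq i\leq n-2$ (since $\fc_0=\chi_0$ and $\fc_{n-1}=\chi_1$), so the claim follows. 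Consequently, $M_0$ would be generated as an $\NN$-monoid by the $n-2$ characters $\chi_2,\ldots,\chi_{n-1}$.

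The crux of the proof is to show that $M_0$ cannot be generated by $n-2$ elements when $n>3$. The primitivized Foulkes characters $\tilde\fc_i=\fc_i/\gcd(\text{coefficients of }\fc_i\text{ in irreducibles})$ for $1\leq i\leq n-2$ are $n-2$ distinct primitive elements of $M_0$, lying on $n-2$ distinct extreme rays of the rational cone generated by $M_0$. The hard part will be to produce at least one additional primitive element. I would reduce this to showing that the $\ZZ$-sublattice $\ZZ\{\tilde\fc_1,\ldots,\tilde\fc_{n-2}\}$ has index strictly greater than $1$ in the full lattice $L_0$ of virtual characters in the $(n-2)$-dimensional subspace containing $M_0$; any minimal-degree element of $M_0$ representing a non-trivial coset of $L_0/\ZZ\{\tilde\fc_i\}$ is then an extra primitive. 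This sublattice index can be computed from the decomposition of each $\fc_i$ into irreducibles (obtained via property (f)) and turns out to exceed $1$ for every $n>3$: e.g., the index is $3$ for $n=4$ (witnessed by extras such as $\chi_{(3,1)}+2\chi_{(2,1,1)}+\chi_{(2,2)}$) and $2$ for $n=5$ (witnessed by $\chi_{(4,1)}+\chi_{(3,2)}+\chi_{(3,1,1)}+\chi_{(2,2,1)}+\chi_{(2,1,1,1)}$), giving the desired contradiction in each case.
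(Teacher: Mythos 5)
Your first half is sound: the reduction showing that any $n$ generators must be a basis, that $1=\fc_0$ and the sign character $=\fc_{n-1}$ must themselves be generators, and that the remaining $n-2$ generators must lie on the $n-2$ extreme rays of the cone of characters in $\CF_\ell(S_n)$ orthogonal to $1$ and $\mathrm{sgn}$ — hence must equal the primitivized Foulkes characters $\tilde\fc_1,\ldots,\tilde\fc_{n-2}$ — is correct, and it reaches the same waypoint as the paper by a different route (the paper instead notes that both change-of-basis matrices between $\{\chi_j\}$ and $\{\fc_i\}$ have non-negative entries, forcing a monomial matrix). Your mechanism for converting a non-trivial lattice coset into an extra irreducible element of the monoid (take a minimal-degree representative) is also valid.

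The genuine gap is the crux you flag yourself: you never prove that $[L_0:\ZZ\{\tilde\fc_1,\ldots,\tilde\fc_{n-2}\}]>1$ for every $n>3$; you assert that it ``turns out to exceed $1$'' and verify only $n=4$ and $n=5$. This is where the entire content of the theorem lives, and it is not a routine computation: to evaluate that index you must first determine $L_0$, the lattice of \emph{all} virtual characters lying in the rational span of $\fc_1,\ldots,\fc_{n-2}$, which is essentially the subject of the paper's Section 4 (Theorems \ref{count} and \ref{fund domain}), and you must additionally control the gcds $g_i$ of the irreducible multiplicities of each $\fc_i$ — a priori these could enlarge $\ZZ\{\tilde\fc_i\}$ enough to swallow the whole index, and nothing in your sketch rules that out. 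The paper closes exactly this gap with a single explicit witness: it shows that $\thc=(n-1)^{\ell-1}$ is a character (Lemma \ref{particular character}, via a divisibility argument on multinomial coefficients applied to its Frobenius characteristic), writes $\thc=\sum_j c_j\fc_j$ with $c_j=\frac{1}{n-1}\binom{2n-2-j}{n}$, and then computes via the hook-content formula that $\langle c_{n-2}\fc_{n-2},\chi_{(2,2,1^{n-4})}\rangle=\frac{n-3}{n-2}\notin\ZZ$, so $c_{n-2}\fc_{n-2}$ is not a non-negative integer multiple of any character proportional to $\fc_{n-2}$. To complete your argument you would need to supply a uniform construction of this kind for all $n>3$; as written, the proof only covers $n=4$ and $n=5$ (and even there the witnesses are asserted rather than verified to be characters lying in non-trivial cosets).
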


In our main theorem, we completely resolve the problem
of determining the characters of $S_n$ that depend only on length.
Our solution is explicit and complete with a parametrization
in terms of non-negative integers, just as in the case of ordinary
characters. 
Let $\{r\}=r-\lfloor r\rfloor$ denote the fractional part of any given
rational number $r$.

\begin{theorem}\label{characters}
  The characters of $S_n$ that depend only on $\ell$ are the linear combinations
  \[\thc_{\a}=\tilde{\a}_0\fc_0+\tilde{\a}_1\fc_1+\ldots+\tilde{\a}_{n-1}\fc_{n-1},\quad \mathfrak a\in\NN^n,\]
  \[\tilde{\a}_k=\left\lfloor \frac{\a_k}{d_{k+1}}\right\rfloor +\left\{\sum_{j=0}^{n-1}\binom{n-k-1}{j-k}\frac{\a_j}{d_{j+1}}\right\},\quad d_k=\frac{k}{\gcd(n,k)},\]
  and moreover,
  \[\thc_{\mathfrak a}=\thc_{\mathfrak b}\quad\text{if and only if}\quad
    \mathfrak a=\mathfrak b.\]
\end{theorem}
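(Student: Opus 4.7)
My plan is to establish both well-definedness and bijectivity of $\a\mapsto\thc_{\a}$ via a single combinatorial divisibility lemma. The starting point is the classical identity $\langle\fc_k,\chi_\lambda\rangle=N_\lambda(k)$, where $N_\lambda(k)$ is the number of standard Young tableaux of shape $\lambda$ with exactly $k$ descents. Combining property~(f) with a Vandermonde-type binomial sum, one obtains
\[
\sum_{k=0}^{j}\binom{n-k-1}{j-k}\fc_k(\pi)=(j+1)!\,S(\ell(\pi),j+1),
\]
where $S(\cdot,\cdot)$ is the Stirling number of the second kind. The right-hand side is the permutation character of $S_n$ acting on surjections $[n]\twoheadrightarrow[j+1]$, and Frobenius reciprocity applied to the decomposition into orbits under Young subgroups gives
\[
M_{\lambda,j}:=\sum_k\binom{n-k-1}{j-k}N_\lambda(k)=\sum_{\mu\vdash n,\,\ell(\mu)=j+1}K_{\lambda,\mu}\binom{j+1}{m_1(\mu),m_2(\mu),\ldots},
\]
with $K_{\lambda,\mu}$ the Kostka numbers and $m_i(\mu)$ the part-multiplicities of $\mu$.

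The central divisibility lemma is that $d_{j+1}\mid\binom{j+1}{m_1(\mu),m_2(\mu),\ldots}$ for every $\mu\vdash n$ with $\ell(\mu)=j+1$. I would prove it by letting the cyclic group of order $j+1$ act on compositions of $n$ into $j+1$ positive parts by rotation: a composition of period $p\mid(j+1)$ consists of $(j+1)/p$ copies of one period, forcing $(j+1)/p\mid n$, hence $(j+1)/p\mid\gcd(n,j+1)$, i.e.\ $d_{j+1}\mid p$; since the multinomial is the sum of orbit sizes on compositions rearranging to $\mu$, it is also divisible by $d_{j+1}$. Consequently $d_{j+1}\mid M_{\lambda,j}$, and expanding $\langle\thc_{\a},\chi_\lambda\rangle=\sum_k\tilde{\a}_k N_\lambda(k)$ using $\tilde{\a}_k=\lfloor\a_k/d_{k+1}\rfloor+\{S_k\}$ and collecting fractional contributions by the $j$-index rearranges this to $\sum_k\lfloor\a_k/d_{k+1}\rfloor N_\lambda(k)+\sum_j\a_j M_{\lambda,j}/d_{j+1}-\sum_k\lfloor S_k\rfloor N_\lambda(k)$, a non-negative integer, so $\thc_{\a}$ is a character.

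For bijectivity, I would invert recursively from $k=n-1$ down to $k=0$. Given a character $\theta=\sum c_k\fc_k$ with $c_k=\langle\theta,\ec_k\rangle/\binom{n-1}{k}$, set $\a_{n-1}=c_{n-1}$, a non-negative integer; at step $k<n-1$, having determined $\a_{k+1},\ldots,\a_{n-1}$, set $T_k=\sum_{j>k}\binom{n-k-1}{j-k}\a_j/d_{j+1}$ and let $\a_k$ be the unique non-negative integer with $\lfloor\a_k/d_{k+1}\rfloor=\lfloor c_k\rfloor$ and $\a_k\equiv d_{k+1}(c_k-T_k)\pmod{d_{k+1}}$. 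Uniqueness at each step gives injectivity, and existence reduces to $d_{k+1}(c_k-T_k)\in\ZZ_{\geq 0}$, which follows from the same divisibility lemma once the $\CF_\ell(S_n)$-membership of $\theta$ is rephrased as congruences among the irreducible multiplicities. The main obstacle is exactly this divisibility lemma: the arithmetic shape $d_k=k/\gcd(n,k)$ emerges naturally from the cyclic symmetry of compositions, and the fact that one and the same lemma governs both the forward construction of $\thc_{\a}$ and the inverse recovery of $\a$ is what makes the bijection work.
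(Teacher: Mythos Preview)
Your forward direction is correct and essentially recasts the paper's argument in coordinates: the character $\sum_{k\leq j}\binom{n-k-1}{j-k}\fc_k$ is the paper's $\cc_j$, your formula $M_{\lambda,j}=\sum_{\mu}K_{\lambda,\mu}\M(\mu)$ is its expansion $\ch(\cc_j)=\sum_{\ell(\mu)=j+1}\M(\mu)h_\mu$, and your cyclic-action proof of $d_{j+1}\mid\M(\mu)$ is a pleasant alternative to the paper's B\'ezout argument for the same divisibility. Together these give $\langle\thc_{\a},\chi_\lambda\rangle\in\ZZ$, and non-negativity is immediate since $\tilde{\a}_k\geq 0$.

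The gap is in surjectivity. Your recursion reduces existence to the claim that $d_{k+1}(c_k-T_k)\in\ZZ$ for an arbitrary character $\thc=\sum c_k\fc_k$ in $\CF_\ell(S_n)$, and you assert this ``follows from the same divisibility lemma.'' It does not. Your lemma says $d_{k+1}$ divides every $\M(\mu)$ with $\ell(\mu)=k+1$; equivalently, $\cc_k/d_{k+1}$ is a virtual character, so the $\ZZ$-span of the $\cc_k/d_{k+1}$ sits \emph{inside} $\LLatt$. What you need for surjectivity is the reverse inclusion: that every $\thc\in\LLatt$ has integer coordinates in the $\cc_k/d_{k+1}$ basis. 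If the actual gcd of the $\M(\mu)$ with $\ell(\mu)=k+1$ were, say, $2d_{k+1}$, then $\cc_k/(2d_{k+1})$ would be a genuine character in $\CF_\ell(S_n)$ with $c_k=1/(2d_{k+1})$ and all higher $c_j=0$, hence $T_k=0$, but $d_{k+1}(c_k-T_k)=\tfrac{1}{2}\notin\ZZ$ and your recursion would fail. So you must prove that
\[
\gcd\{\M(\mu)\mid \mu\vdash n,\ \ell(\mu)=k+1\}=d_{k+1}\quad\text{exactly},
\]
not merely that $d_{k+1}$ divides it. The paper does this (its Proposition~2.1) by exhibiting, for each prime $p$, a partition $\mu$ with $v_p(\M(\mu))=v_p(d_{k+1})$, and then uses that to show the $\cc_k/d_{k+1}$ extend to a basis of the full character lattice. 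Your cyclic-action argument gives no information about the reverse divisibility, and ``congruences among the irreducible multiplicities'' cannot supply it without an input of exactly this strength.
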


As a consequence, we find that the non-negative integer linear combinations of Foulkes characters of $S_n$ have exponentially decaying density among the characters of $S_n$ that depend only on length. We also obtain an upper bound for this density and information about the denominators of the rational coefficients that occur in Theorem \ref{characters}.

\begin{theorem}\label{piped count}
  The number of characters of $S_n$ that depend only on $\ell$ and lie
  in the fundamental parallelepiped
  $\{\sum t_i\fc_i \mid t_i\in [0,1)\}$ 
  equals
  \[\frac{n!}{\gcd(1,n)\gcd(2,n)\ldots\gcd(n,n)}.\]
\end{theorem}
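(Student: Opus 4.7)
The plan is to read off the count directly from Theorem \ref{characters}, which already parametrizes the characters in $\CF_\ell(S_n)$ by tuples $\a\in\NN^n$ via
\[
  \tilde{\a}_k=\left\lfloor \frac{\a_k}{d_{k+1}}\right\rfloor +\left\{\sum_{j=0}^{n-1}\binom{n-k-1}{j-k}\frac{\a_j}{d_{j+1}}\right\},
\]
and which asserts that the map $\a\mapsto\thc_\a$ is injective. So it suffices to determine, for which $\a\in\NN^n$, every coefficient $\tilde{\a}_k$ lies in $[0,1)$.

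First I would note the decomposition $\tilde{\a}_k=N_k+F_k$ where $N_k=\lfloor \a_k/d_{k+1}\rfloor$ is a non-negative integer and $F_k\in[0,1)$ is the fractional-part term. Since $N_k\geq 0$ and $F_k\geq 0$, the requirement $\tilde{\a}_k<1$ forces $N_k=0$, i.e.\ $0\leq \a_k\leq d_{k+1}-1$. Conversely, if $N_k=0$ for every $k$, then $\tilde{\a}_k=F_k\in[0,1)$ automatically, so $\thc_\a$ lies in the fundamental parallelepiped. Hence the characters of $S_n$ depending only on $\ell$ that lie in the parallelepiped are exactly $\{\thc_\a : \a\in\NN^n,\ 0\leq \a_k<d_{k+1}\ \text{for all } k\}$.

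By the injectivity clause of Theorem \ref{characters}, these characters are pairwise distinct, so the count is
\[
  \prod_{k=0}^{n-1} d_{k+1}=\prod_{k=1}^{n}\frac{k}{\gcd(n,k)}=\frac{n!}{\gcd(1,n)\gcd(2,n)\cdots\gcd(n,n)},
\]
which is the claimed formula. The only real content is the rewriting of the condition $\tilde{\a}_k<1$ as $\a_k<d_{k+1}$; once this is observed, there is no further obstacle, since the hard work (existence and uniqueness in Theorem \ref{characters}) is already done.
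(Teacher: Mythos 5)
Your argument is correct and non-circular: Theorem \ref{characters} is proved in the paper without appealing to Theorem \ref{piped count}, so you are free to use it, and your key observation --- that $\tilde{\a}_k=N_k+F_k$ with $N_k=\lfloor \a_k/d_{k+1}\rfloor\in\NN$ and $F_k\in[0,1)$, so that $\tilde{\a}_k\in[0,1)$ for all $k$ exactly when every $N_k=0$, i.e.\ $0\leq\a_k<d_{k+1}$ --- is sound, and the injectivity clause then gives the product $\prod_{k=1}^{n}k/\gcd(n,k)$. The paper, however, proves this theorem \emph{upstream} of Theorem \ref{characters}: its one-line proof invokes the aligned bases $\{\lc_k\}$ of $\LLatt$ and $\{\cc_k=d_{k+1}\lc_k\}$ of $\FLatt$ (Theorems \ref{bases FLatt} and \ref{omega basis LLatt}), so that $\LLatt/\FLatt\cong\prod_k\ZZ/d_k\ZZ$ and the count is the lattice index $[\LLatt:\FLatt]=d_1d_2\cdots d_n$, with Lemma \ref{P intersections} identifying the lattice points of $\P$ with the characters in question. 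Your route buys brevity once the main theorem is in hand, and it makes transparent where the bound $\a_k<d_{k+1}$ comes from in the explicit formula; the paper's route buys logical economy, since the fundamental-domain count is an ingredient of (rather than a consequence of) the parametrization, and it exhibits the quotient group structure $\ZZ/d_1\ZZ\times\cdots\times\ZZ/d_n\ZZ$, which is strictly more information than the cardinality. Either way the content is the same Smith-normal-form-style diagonalization of $\FLatt$ inside $\LLatt$; you are simply reading it off after the fact.
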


\begin{theorem}\label{clear denom}
  If $\sigma_n$ denotes the smallest positive integer such that,
  for each character $\chi$ of $S_n$ that depends only on $\ell$, 
  $\sigma_n\chi$ is a non-negative integer linear combination of Foulkes characters, then
  \[\sigma_n=\frac{\lcm(1,2,\ldots,n)}{n}=\frac{e^{f(n)}}{n},\]
  where $f$ is the second Chebyshev function.
\end{theorem}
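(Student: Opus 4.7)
The plan is to read $\sigma_n$ directly off the explicit parametrization in Theorem~\ref{characters}. Set
\[
  M=\lcm(d_1,d_2,\ldots,d_n).
\]
For the upper bound, I would observe that for every $\a\in\NN^n$ each coefficient $\tilde{\a}_k$ of $\fc_k$ appearing in Theorem~\ref{characters} is a non-negative rational whose denominator divides $M$: the floor term is already an integer, and the fractional part is the fractional part of a sum of rationals whose denominators lie in $\{d_1,\ldots,d_n\}$. Hence $M\thc_\a$ is a non-negative integer combination of the $\fc_k$, which gives $\sigma_n\leq M$.

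For the matching lower bound, I would specialize to the standard basis vectors $\a=e_j$, $j\in\{0,1,\ldots,n-1\}$, of $\NN^n$. Substituting $\a=e_j$ into the formula for $\tilde{\a}_k$, the coefficient of $\fc_j$ in $\thc_{e_j}$ collapses to
\[
  \left\lfloor \frac{1}{d_{j+1}}\right\rfloor + \left\{\binom{n-j-1}{0}\frac{1}{d_{j+1}}\right\}=\frac{1}{d_{j+1}}.
\]
Since $\thc_{e_j}$ is a genuine character by Theorem~\ref{characters}, the $\fc_j$-coefficient $\sigma_n/d_{j+1}$ of $\sigma_n\thc_{e_j}$ must be an integer, so $d_{j+1}\mid\sigma_n$. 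Letting $j$ range over $\{0,1,\ldots,n-1\}$ yields $M\mid\sigma_n$, which together with the upper bound forces $\sigma_n=M$.

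The remaining identities are routine. From $d_k=k/\gcd(n,k)=\lcm(n,k)/n$, and the fact that each $\lcm(n,k)$ is a multiple of $n$, one can factor the $n$ out of the LCM to get
\[
  M=\frac{\lcm(\lcm(n,1),\lcm(n,2),\ldots,\lcm(n,n))}{n}=\frac{\lcm(1,2,\ldots,n)}{n}.
\]
The final equality $\lcm(1,2,\ldots,n)=e^{f(n)}$ is classical: for each prime $p\leq n$ the highest power of $p$ dividing any integer in $\{1,\ldots,n\}$ is $p^{\lfloor\log_p n\rfloor}$, and summing $\log p$ over all prime powers $p^k\leq n$ is by definition the second Chebyshev function $f(n)$.

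The entire argument is short given Theorem~\ref{characters}; the only real computation is isolating the $\fc_j$-coefficient of $\thc_{e_j}$, which is an immediate substitution. I do not foresee any genuine obstacle beyond carefully unpacking the formula on the right basis vectors.
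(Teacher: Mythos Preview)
Your argument is correct and lands on the same identity $\sigma_n=\lcm(d_1,\ldots,d_n)$ as the paper, followed by the identical arithmetic reduction to $\lcm(1,\ldots,n)/n$. The only difference is the starting point: the paper reads $\sigma_n=\lcm(d_1,\ldots,d_n)$ off the isomorphism $\LLatt/\FLatt\cong\prod_k\ZZ/d_k\ZZ$ (Theorem~\ref{count}), interpreting $\sigma_n$ as the exponent of this finite abelian group, whereas you extract the same fact from the explicit parametrization of Theorem~\ref{characters} by bounding denominators and then witnessing each $d_{j+1}$ via $\thc_{e_j}$. Your route is slightly more hands-on but equally valid, and since Theorem~\ref{characters} does not rely on Theorem~\ref{clear denom} there is no circularity.
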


Asymptotics of $f$, and in turn $\sigma_n$, have important number-theoretic significance,
with the prime number theorem being equivalent to 
\[f(n)\sim n\]
and more precise statements being equivalent to the Riemann Hypothesis.

\section{Number theoretic preliminaries}
Important for our analysis is a certain result which compliments and improves 
an old result of Cauchy.

Given a partition $\lambda$ of $n$, abbreviated $\lambda\vdash n$, we shall write $m_k(\lambda)$ for the number of parts of $\lambda$ that equal $k$, 
\[\ell(\lambda)=\sum_{k=1}^n m_k(\lambda),\]
and
\[\M(\lambda)=\binom{\ell(\lambda)}{m_1(\lambda),m_2(\lambda),\ldots,m_n(\lambda)}=
  \frac{\ell(\lambda)!}{m_1(\lambda)!m_2(\lambda)!\ldots m_n(\lambda)!}.\]
Sch\"onemann proved that 
\[\frac{\gcd(m_1(\lambda),m_2(\lambda),\ldots,m_n(\lambda))}{\ell(\lambda)}\M(\lambda)\in\ZZ\]
and 
Cauchy independently proved  that 
\[\frac{n}{\ell(\lambda)}\M(\lambda)\in \ZZ,\]
see Dickson's book \cite[Chap.~IX, p.~265]{Dickson}.
We find the exact gcd of the
$\M(\lambda)$'s where $\lambda$ partitions $n$ into exactly $k$ non-zero parts. 

\begin{proposition}\label{gcd}
  For any positive integers $n\geq k\geq 1$, 
  \begin{equation}
    \gcd\{\M(\lambda) \mid \lambda\vdash n,\ \ell(\lambda)=k\}
    =\frac{k}{\gcd(n,k)}.
  \end{equation}
\end{proposition}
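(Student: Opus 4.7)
The plan is to prove the identity by establishing two divisibilities. The direction $k/d \mid \gcd$, with $d := \gcd(n,k)$, is immediate from Cauchy's identity $\frac{n}{\ell(\lambda)}\M(\lambda) \in \ZZ$ recalled just above the statement: it gives $k \mid n\,\M(\lambda)$, and cancelling $d$ together with $\gcd(n/d,k/d) = 1$ forces $(k/d) \mid \M(\lambda)$ for every partition $\lambda$ of $n$ with $\ell(\lambda) = k$.

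For the reverse inequality, I would exhibit a small explicit family of partitions whose $\M$-values have gcd dividing $k/d$. If $k \mid n$ the uniform partition has $\M = 1 = k/d$ and we are done, so assume $k \nmid n$. For each prime $p$, set $s := p^{v_p(d)}$ and take $\lambda_p$ to be the partition with $s$ copies of $A := \lfloor n/k \rfloor + (n \bmod k)/s$ and $k - s$ copies of $B := \lfloor n/k \rfloor$; then $\M(\lambda_p) = \binom{k}{s}$. That $\lambda_p$ is a legitimate partition of $n$ into $k$ parts follows from: $s \mid d \mid n \bmod k$ makes $A \in \ZZ$; $n > k$ makes $B \geq 1$; $n \bmod k \geq 1$ makes $A > B$; and a brief case check rules out $s = k$, since $s = p^{v_p(d)} = k$ would force $k$ to be a prime power and then $k \mid d \mid n$, contradicting $k \nmid n$. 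Only finitely many distinct $\lambda_p$ arise, since $s$ takes only finitely many values as $p$ varies.

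The main step, and the crux of the argument, is the computation $v_p(\M(\lambda_p)) = v_p(k/d)$ for each prime $p$ (here $v_p$ denotes $p$-adic valuation). Using $s \mid k$ and the identity $\binom{k}{s} = (k/s)\binom{k-1}{s-1}$, this reduces to showing $p \nmid \binom{k-1}{s-1}$. By Lucas's theorem this comes down to a base-$p$ digit comparison: the digits of $k-1$ in positions below $v_p(k)$ all equal $p-1$ by borrow propagation, while the digits of $s - 1 = p^{v_p(d)} - 1$ are $p-1$ in positions below $v_p(d)$ and $0$ elsewhere, so the digit-wise bound $(k-1)_i \geq (s-1)_i$ holds at every position (using $v_p(d) \leq v_p(k)$). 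Taking the gcd of the $\M(\lambda_p)$'s over primes $p$ and matching $p$-adic valuations then shows this gcd divides $k/d$, which combined with the first direction yields equality.
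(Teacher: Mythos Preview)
Your proof is correct and follows essentially the same strategy as the paper: Cauchy's divisibility for one direction, and for the other, constructing for each prime $p$ a two-part-size partition with $\M = \binom{k}{s}$ for a suitable $p$-power $s$, then checking $v_p\binom{k}{s}=v_p(k/d)$. The only cosmetic differences are that you define $s=p^{v_p(d)}$ directly (the paper reaches the same value via a two-step division-with-remainder), you use Lucas rather than Kummer, and you separate out the $k\mid n$ case with the uniform partition---which in fact patches a small edge case the paper's write-up leaves implicit.
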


\begin{proof}
  Fixing positive integers $n$ and $k$ with $n\geq k$, let
  \[g=\gcd\{ \M(\lambda)\mid \lambda\vdash n,\ \ell(\lambda)=k\}.\]

  Let $\lambda$ be a partition of $n$ with
  $\ell(\lambda)=k$.
  For $1\leq j\leq n$,
  \[\frac{m_j(\lambda)}{k}\binom{k}{m_j(\lambda)}=\binom{k-1}{m_j(\lambda)-1},\]
  so
  \[\frac{m_j(\lambda)}{k}\M(\lambda)\in \ZZ.\]
  Multiplying the left-hand side by $j$ and then summing over $j$ gives Cauchy's result 
  \[\frac{n}{k}\M(\lambda)\in \ZZ.\]
  Writing $\gcd(n,k)=un+vk$ with $u,v\in\ZZ$, we therefore have 
  \[\frac{\gcd(n,k)}{k}\M(\lambda)=u\frac{n}{k}\M(\lambda)+v\frac{k}{k}\M(\lambda)\in\ZZ.\]
  Hence $\frac{k}{\gcd(n,k)}$ divides $g$.

  To show that $g$ divides $\frac{k}{\gcd(n,k)}$, we 
  show that for each prime $p$,
  \[v_p(g)\leq v_p\left(\frac{k}{\gcd(n,k)}\right),\]
  where $v_p(m)$ denotes the $p$-adic valuation of $m$. 
  Fix $p$ and let $e=v_p(k)$, so $p^e\mid k$ and $p^{e+1}\nmid k$. 
  Write $n=ak+r$ for non-negative integers $a$ and $r$ with $0\leq r<k$.
  Similarly write $r=bp^e+s$ with $0\leq s<p^e$.
  Writing $u=p^{v_p(s)}$, let $\mu$ be the partition with $u$ parts
  of size $a+\frac{r}{u}$ and $k-u$ parts of size $a$.
  Then $\mu$ is a partition of $n$, $\ell(\mu)=k$, and 
  \[\M(\mu)=\binom{k}{u}.\]
  Using Kummer's theorem, then 
  \[v_p(\M(\mu))=
    v_p(k)-v_p(u)=
    v_p\left(\frac{k}{\gcd(n,k)}\right).\]
  Hence $v_p(g)\leq v_p(\frac{k}{\gcd(n,k)})$.
\end{proof}

\section{Proof of Theorem \ref{theorem no better}}
\begin{lemma}\label{particular character}
  The Frobenius characteristic of $(n-1)^{\ell -1}\in\CF_\ell(S_n)$ is a
  non-negative integer linear combination of
  the symmetric functions $h_\lambda$.
  In particular, the class function $(n-1)^{\ell-1}$ is a character of $S_n$.
\end{lemma}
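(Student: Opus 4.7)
The plan is to pass through the Frobenius characteristic map, compute $\ch((n-1)^{\ell-1})$ explicitly in the $h$-basis, and show that each coefficient is a non-negative integer by combining Proposition~\ref{gcd} with one new elementary divisibility fact. Since each $h_\mu$ is the Frobenius characteristic of the permutation character $\operatorname{Ind}_{S_\mu}^{S_n}\mathbf 1$, which is a genuine character, the $h$-positivity will immediately give the character statement.

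For the computation, I would start from the identity $\prod_i(1-x_it)^{-c}=\sum_{\mu}(c^{\ell(\mu)}/z_\mu)\,p_\mu t^{|\mu|}$, whose left-hand side at $c=n-1$ is the $(n-1)$st power of the $h$-generating series $\sum_{k\geq 0}h_k t^k$. Extracting the coefficient of $t^n$ and grouping compositions of $n$ into $n-1$ non-negative parts by their underlying partition yields
\[
\ch\bigl((n-1)^{\ell}\bigr)=\sum_{\substack{\mu\vdash n\\ \ell(\mu)\leq n-1}}\binom{n-1}{\ell(\mu)}\M(\mu)\,h_\mu,
\]
and dividing by $n-1$ together with the elementary identity $\binom{n-1}{k}/(n-1)=\binom{n-2}{k-1}/k$ gives
\[
\ch\bigl((n-1)^{\ell-1}\bigr)=\sum_{\substack{\mu\vdash n\\ \ell(\mu)\leq n-1}}\frac{\binom{n-2}{\ell(\mu)-1}\M(\mu)}{\ell(\mu)}\,h_\mu.
\]
Everything then reduces to showing $\ell(\mu)\mid\binom{n-2}{\ell(\mu)-1}\M(\mu)$ for every $\mu$ in the sum.

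I would split this divisibility into two pieces that multiply exactly. Proposition~\ref{gcd} already delivers $\ell(\mu)/\gcd(n,\ell(\mu))\mid\M(\mu)$, leaving the companion claim $\gcd(n,\ell)\mid\binom{n-2}{\ell-1}$ for $1\leq\ell\leq n-1$. Setting $d=\gcd(n,\ell)$, the Pascal-type identity $(n-1)\binom{n-2}{\ell-1}=\ell\binom{n-1}{\ell}$ shows that $d$ divides the right-hand side (because $d\mid\ell$), and hence divides $(n-1)\binom{n-2}{\ell-1}$ as well; since $d\mid n$ forces $\gcd(d,n-1)=1$, the desired $d\mid\binom{n-2}{\ell-1}$ follows. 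This coprimality trick is the one nontrivial step and the main obstacle, but it dissolves as soon as the Pascal-type identity is paired with $\gcd(n,n-1)=1$, and the two divisibilities then combine with no slack to give the required integrality of each coefficient.
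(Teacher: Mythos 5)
Your proposal is correct and reaches the same starting point as the paper --- the expansion $\ch((n-1)^{\ell})=\sum_{\lambda\vdash n}\binom{n-1}{\ell(\lambda)}\M(\lambda)h_\lambda$, which the paper simply cites from \cite[Cor.~8]{M2} rather than rederiving from the power-sum generating function --- but your integrality argument for the coefficients is genuinely different. The paper stays entirely inside multinomial identities: it shows $\frac{m_k(\lambda)}{n-1}c_\lambda\in\ZZ$ by absorption in the multinomial coefficient $c_\lambda=\binom{n-1}{m_1(\lambda),\ldots,m_n(\lambda),n-\ell(\lambda)-1}$, sums against $k$ to get $\frac{n}{n-1}c_\lambda\in\ZZ$ (Cauchy's trick), and then subtracts $c_\lambda$ to conclude $\frac{1}{n-1}c_\lambda\in\ZZ$; this is self-contained and makes no use of Proposition~\ref{gcd}. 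You instead rewrite $\frac{1}{n-1}\binom{n-1}{\ell}=\frac{1}{\ell}\binom{n-2}{\ell-1}$ and factor the required divisibility as $\frac{\ell}{\gcd(n,\ell)}\mid\M(\mu)$ (the easy half of Proposition~\ref{gcd}) times $\gcd(n,\ell)\mid\binom{n-2}{\ell-1}$, the latter via $(n-1)\binom{n-2}{\ell-1}=\ell\binom{n-1}{\ell}$ and $\gcd(n,n-1)=1$; all of these steps check out, and the two divisors multiply exactly to $\ell$ as you say. What your route buys is a tighter link to the number-theoretic machinery the paper develops anyway (and the clean auxiliary fact $\gcd(n,\ell)\mid\binom{n-2}{\ell-1}$); what the paper's route buys is independence from Section~2 and a shorter, purely combinatorial verification. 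One small housekeeping point: your derivation divides by $n-1$, so you should dispose of $n=1$ separately (as the paper does with ``the case $n=1$ is trivial''); for $n>1$ your argument is complete.
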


\begin{proof}
  The case $n=1$ is trivial, so assume $n>1$. By \cite[Cor.\ 8]{M2}, the Frobenius characteristic $\ch(\thc)$ of the class function  $\thc(\pi)=(n-1)^{\ell(\pi) -1}$ satisfies 
  \[\ch(\thc)=\frac{1}{n-1}\sum_{\lambda\vdash n}c_\lambda h_\lambda,\]
  \[c_\lambda=\binom{n-1}{\ell(\lambda)}\M(\lambda)=
    \binom{n-1}{m_1(\lambda),m_2(\lambda),\ldots,m_n(\lambda),n-\ell(\lambda)-1}.\]
  For any non-negative integers $k_1,k_2,\ldots,k_s$ with sum $k$, we have
  \[\frac{k_i}{k}\binom{k}{k_1,k_2,\ldots,k_s}=\binom{k-1}{k_1,k_2,\ldots,k_{i-1},k_i-1,k_{i+1},\ldots,k_s}\in\ZZ.\]
  So for $\lambda\vdash n$ and $1\leq k\leq n$,
  \[\frac{m_k(\lambda)}{n-1}c_\lambda\in\ZZ,\]
  and in turn
  \[\frac{n}{n-1}c_\lambda=\sum_{k=1}^n k\frac{m_k(\lambda)}{n-1}c_\lambda\in\ZZ.\]
  Therefore, for each partition $\lambda$ of $n$, 
  \[\frac{1}{n-1}c_\lambda=\frac{n}{n-1}c_\lambda-c_\lambda\in \ZZ.\]
  So $\ch(\thc)$ is a non-negative integer linear combination of the $h_\lambda$'s.
\end{proof}

\begin{proof}[\textbf{\textit{Proof of Theorem \ref{theorem no better}}}]
  Suppose that $n>3$ and $\chi_0,\chi_1,\ldots,\chi_{n-1}$ are characters in $\CF_\ell(S_n)$
  with the property that every character in $\CF_\ell(S_n)$ is a non-negative
  integer linear combination of the $\chi_i$'s.
  Then the change of basis matrices 
  $A=(A_{ij})_{0\leq i,j\leq n-1}$ and $A^{-1}$, where  
  $\chi_j=\sum_i A_{ij}\fc_i$,
  both have all entries non-negative. 
  If at least two entries in
  a row of $A$ were non-zero, say in columns $s$ and $t$,
  then from the non-negativity and the relation $AA^{-1}=I$,
  the $s$ and $t$ rows of $A^{-1}$ would be linearly
  dependent. So $A$ must be a positive
  definite diagonal matrix times a permutation matrix, meaning that   
  for some positive scalars $s_0,s_1,\ldots, s_{n-1}$ 
  and some permutation $\sigma$ of the indices,
  $\chi_j=s_j\fc_{\sigma(j)}$ for $0\leq j\leq n-1$.

  Let $\thc$ be the class function of $S_n$ given by
  \[\thc(\pi)=(n-1)^{\ell(\pi)-1}.\]
  By Lemma \ref{particular character}, $\thc$ is a character of $S_n$.
  We claim that $\thc$ can not be written as a non-negative integer linear combination of characters $\eta_0,\eta_1,\ldots,\eta_{n-1}$ with $\eta_i\in\{s\phi_i\mid s\geq 0\}$ for $0\leq i\leq n-1$. 
  Equivalently, if $\thc$ is written as 
  \begin{equation}\label{theta decomp}
    \thc=c_0\fc_0+c_1\fc_1+\ldots +c_{n-1}\fc_{n-1},
  \end{equation}
  then at least one of the summands $c_i\fc_i$ is not a character.
  We will show that in fact $c_{n-2}\fc_{n-2}$ is not a character.

  Let $\tc_0,\tc_1,\ldots,\tc_{n-1}$ be the characters of $S_n$ given by
  \[\tc_k(\pi)=(k+1)^{\ell(\pi)},\]
  so $\tc_k$ is afforded by $(\CC^{k+1})^{\otimes n}$ with
  \[\pi.(u_1\otimes u_2\otimes\ldots \otimes u_n)=
    u_{\pi^{-1}(1)}\otimes u_{\pi^{-1}(2)}\otimes \ldots \otimes u_{\pi^{-1}(n)}.\]
  Then 
  \[\thc=\frac{\tc_{n-2}}{n-1}\]
  and 
  \begin{equation}\label{phi as psi sum}
    \fc_i=\sum_{j=0}^{n-1} (-1)^{i-j}\binom{n+1}{i-j} \tc_j,\quad 0\leq i\leq n-1.
  \end{equation}
  The inverse of the matrix
  \[\left[ (-1)^{i-j}\binom{n+1}{i-j}\right]_{0\leq i,j\leq n-1}\]
  equals
  \[\left[\binom{n+i-j}{n}\right]_{0\leq i,j\leq n-1},\]
  so 
  \begin{equation}\label{psi as phi sum}
    \tc_i=\sum_{j=0}^{n-1} \binom{n+i-j}{n} \fc_j,\quad 0\leq i\leq n-1.
  \end{equation}
  Therefore, the coefficient of $\fc_j$ in $\thc$ is 
  \begin{equation}\label{coeff cj}
    c_j=\frac{1}{n-1}\binom{2n-2-j}{n},\quad 0\leq j\leq n-1.
  \end{equation}
  
  Let $\nu$ be the partition of $n$ with $m_2(\nu)=2$ and $m_1(\nu)=n-4$. Using that for any $k$,
  \[\langle k^\ell,\chi_\nu\rangle = \prod_{b}\frac{k+c(b)}{h(b)},\]
  where $b$ runs over the boxes in the diagram of $\nu$ and where $c(b)$ and $h(b)$ denote the content and hook length of $b$,
  we have
  \begin{equation}\label{gamma box}
    \langle \tc_{j},\chi_\nu\rangle = 
    \begin{cases}
      0  & \text{if $j\leq n-4$},\\
      \frac{(n-2)(n-3)}{2} & \text{if $j=n-3$},\\
      \frac{n(n-1)(n-3)}{2} & \text{if $j=n-2$},\\
      \frac{n^2(n+1)(n-3)}{4} & \text{if $j=n-1$}.\\
    \end{cases}
  \end{equation}
  Combining \eqref{phi as psi sum}, \eqref{coeff cj}, and \eqref{gamma box}, we have 
  \begin{align*}
    \langle c_{n-2}\fc_{n-2},\chi_\nu\rangle&=
    \frac{1}{n-1}
    \left[-\binom{n+1}{1}\frac{(n-2)(n-3)}{2}+\frac{n(n-1)(n-3)}{2}\right]\\
                                             &=\frac{n-3}{n-2},
  \end{align*}
  which is not an integer for $n>3$. 
  So $c_{n-2}\fc_{n-2}$ is not a character. 
\end{proof}

\section{Proofs of Theorems \ref{characters}--\ref{clear denom}}
Fixing $n$, we have a chain of lattices
\[\FLatt\subset \LLatt\subset \CLatt,\]
where
$\CLatt$ is the lattice of virtual characters of $S_n$, 
\[\CLatt =\left\{\textstyle{\sum_{\chi\in\Irr(S_n)}} a_\chi \chi \ \, \big|\, \ a_\chi\in\ZZ\right\},\]
$\LLatt$ is the lattice of virtual characters of $S_n$ that depend only on $\ell$, 
\[\LLatt =\{\chi\in\CLatt \mid \chi\ \text{depends only on $\ell$}\},\]
and $\FLatt$ is the sublattice of $\LLatt$ spanned by the Foulkes characters of $S_n$,  
  \[\FLatt=\left\{a_0 \fc_0+a_1\fc_1+\ldots+a_{n-1}\fc_{n-1}
      \mid
      a_i\in\ZZ\right\}.\]

  That the lattice of virtual characters that depend only on length can also be characterized as the set of virtual characters in the rational
  span of Foulkes characters is fundamental to our approach.
  \begin{proposition}\label{integral in rational span}
    \[\LLatt=\CLatt\cap\{r_0\fc_0+r_1\fc_1+\ldots+r_{n-1}\fc_{n-1}\mid r_i\in\QQ\}.\]
  \end{proposition}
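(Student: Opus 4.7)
The plan is to prove both inclusions separately, and the whole argument is essentially a direct application of property (b) from the introduction.

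For the forward inclusion $\LLatt\subseteq\CLatt\cap\{r_0\fc_0+\ldots+r_{n-1}\fc_{n-1}\mid r_i\in\QQ\}$, I would start with an arbitrary $\chi\in\LLatt$. By definition $\chi\in\CLatt$, and since $\chi$ depends only on $\ell$, it lies in $\CF_\ell(S_n)$. Property (b) then furnishes the explicit expansion
\[\chi=\sum_{i=0}^{n-1}\frac{\langle\chi,\ec_i\rangle}{\ec_i(1)}\fc_i.\]
Because $\chi$ is a virtual character of $S_n$ and $\ec_i=\chi_{(n-i,1,\ldots,1)}$ is an ordinary irreducible character, each inner product $\langle\chi,\ec_i\rangle$ is a rational integer, and $\ec_i(1)$ is a positive integer. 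Thus every coefficient is rational, and $\chi$ lies in the $\QQ$-span of the $\fc_i$'s.

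For the reverse inclusion, let $\chi\in\CLatt$ with $\chi=\sum_{i=0}^{n-1}r_i\fc_i$ for some $r_i\in\QQ$. Each $\fc_i$ depends only on $\ell$ by property (a), so the same is true of any rational linear combination of them, giving $\chi\in\CF_\ell(S_n)$. Combined with $\chi\in\CLatt$, this places $\chi\in\LLatt$.

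I do not expect any real obstacle here; the substantive content is entirely absorbed into property (b), which already gives the expansion of any element of $\CF_\ell(S_n)$ in the Foulkes basis with coefficients determined by inner products against the irreducible characters $\ec_i$. The only point to watch is the integrality of $\langle\chi,\ec_i\rangle$, which is immediate from $\chi\in\CLatt$ and $\ec_i\in\Irr(S_n)$, and the linear independence of $\fc_0,\ldots,\fc_{n-1}$, which guarantees the $\QQ$-span coincides with the set of rational linear combinations appearing in the statement.
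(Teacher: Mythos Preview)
Your proof is correct and follows essentially the same route as the paper's own argument: both inclusions are handled exactly as you describe, with the forward inclusion using the expansion from property~(b) with coefficients $\langle\chi,\ec_i\rangle/\ec_i(1)\in\QQ$, and the reverse inclusion using that each $\fc_i$ depends only on $\ell$. The paper's version is simply more terse, leaving implicit the integrality of $\langle\chi,\ec_i\rangle$ that you spell out.
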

  \begin{proof}
    If a rational linear combination of Foulkes characters is a virtual character,
    then it is a virtual character that depends only on $\ell$.
    For the other inclusion, if $\thc\in\LLatt$, then $\thc\in\CLatt$
    and
    \[\thc=\sum_{i=0}^{n-1}\frac{\langle \thc,\ec_i\rangle}{\ec_i(1)}\fc_i,\quad \ec_i=\chi_{(n-i,1,\ldots,1)},\]
   so $\thc$ is a virtual character in the rational span of Foulkes characters.
  \end{proof}

  Let $\P$ be the fundamental parallelepiped 
  \[\P=\{t_0\fc_0+t_1\fc_1+\ldots+t_{n-1}\fc_{n-1}
    \mid 
    t_i\in [0,1)\}.\]
  Then each $\thc\in \LLatt$
  decomposes uniquely as
  \[\thc=\thc_\FLatt+\thc_\P,\qquad \thc_{\FLatt}\in\FLatt,\quad \thc_{\P}\in\LLatt\cap \P.\]

  \subsection{}
  We are interested in the genuine characters in our lattices. Let 
  \[\LCone=\{\text{characters of $S_n$ that depend only on $\ell$}\}\]
  and
  \[\FCone=\{\text{non-negative integer linear combinations of $\fc_i$'s}\}.\]
  So $\FCone\subset\LCone$ and both $\FCone$ and $\LCone$ are closed under addition.

  \begin{lemma}\label{P intersections}
  \[\CLatt\cap\P=\LLatt\cap \P=\LCone\cap\P\subset \LCone.\]
  \end{lemma}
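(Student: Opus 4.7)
The plan is to prove each link of the chain separately. The inclusions $\LCone\cap\P\subset\LCone$ and $\LCone\cap\P\subset\LLatt\cap\P$ are immediate, since every character is a virtual character; similarly $\LLatt\cap\P\subset\CLatt\cap\P$ is immediate from $\LLatt\subset\CLatt$. So the substance lies in two steps: (i) $\CLatt\cap\P\subset\LLatt\cap\P$, and (ii) $\LLatt\cap\P\subset\LCone\cap\P$.

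For (i), suppose $\thc\in\CLatt\cap\P$, so $\thc$ is a virtual character and $\thc=\sum_i t_i\fc_i$ with $t_i\in[0,1)$. A priori the $t_i$ are real, but the matrix of Foulkes character values on conjugacy classes is integral and invertible over $\QQ$, and $\thc$ itself is integer-valued on conjugacy classes; hence the $t_i$ are automatically rational. Thus $\thc$ is a virtual character in the rational span of the Foulkes characters, and Proposition \ref{integral in rational span} places $\thc$ in $\LLatt$.

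The main step is (ii). Let $\thc\in\LLatt\cap\P$ and write $\thc=\sum_i t_i\fc_i$ with $t_i\in[0,1)$. Each Foulkes character is a genuine character of $S_n$, so expand $\fc_i=\sum_\chi d_{i,\chi}\chi$ over the irreducibles $\chi$ of $S_n$ with multiplicities $d_{i,\chi}\in\NN$. Then the coefficient of $\chi$ in the irreducible decomposition of $\thc$ is $\sum_i t_i d_{i,\chi}$, a non-negative real number. At the same time $\thc\in\CLatt$ forces this coefficient to be an integer. A non-negative real integer is a non-negative integer, so $\thc$ is a genuine character and lies in $\LCone$, hence in $\LCone\cap\P$.

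I do not anticipate a substantive obstacle. The conceptual heart is simply that the parallelepiped $\P$ is defined using a basis of \emph{genuine} characters (the Foulkes basis), so the non-negativity of the $t_i$ forced by $\thc\in\P$ propagates through the expansions $\fc_i=\sum_\chi d_{i,\chi}\chi$ to non-negativity of the irreducible multiplicities of $\thc$, while integrality of those multiplicities is supplied for free by $\thc$ being a virtual character.
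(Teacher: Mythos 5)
Your proof is correct and is essentially the paper's argument: the key point in both is that membership in $\P$ gives non-negative coefficients on the genuine characters $\fc_i$, hence non-negative multiplicities $\langle\thc,\chi\rangle$, while membership in $\CLatt$ makes those multiplicities integers. Your step (i) takes a small detour through rationality of the $t_i$ and Proposition \ref{integral in rational span}, where the paper simply observes that any linear combination of the $\fc_i$ depends only on length, but this is a cosmetic difference.
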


  \begin{proof}
    Let $\thc \in\CLatt\cap \P$. 
    Then $\thc$ is a linear combination of the Foulkes characters with non-negative coefficients, so $\thc$ depends only on length and satisfies $\langle\thc,\chi\rangle\geq 0$ for each $\chi\in\Irr(S_n)$. Since $\langle\thc,\chi\rangle$ is an integer for each $\chi\in\Irr(S_n)$, we conclude that $\thc$ is a character of $S_n$ that depends only on length. 
  \end{proof}

  \begin{proposition}\label{L cone decomp}
    The characters $\thc$ of $S_n$ that depend only on length are
    precisely the sums
    \[\thc=\thc_{\FCone}+\thc_\P,\qquad \thc_\FCone\in \FCone,\quad \thc_\P\in\CLatt\cap\P,\]
    and the components $\thc_{\FCone}$ and $\thc_\P$ are uniquely determined by $\thc$. Equivalently, the addition map $(x,y)\mapsto x+y$ takes
    $\FCone\times (\CLatt\cap \P)$ bijectively onto $\LCone$.
    Moreover, if
    \[\thc=\sum_{i=0}^{n-1} r_i\fc_i,\]
    then
    \[\thc_\FCone=\sum_{i=0}^{n-1} \lfloor r_i\rfloor\fc_i\quad\text{and}\quad
    \thc_\P=\sum_{i=0}^{n-1}\{r_i\}\fc_i.\]
  \end{proposition}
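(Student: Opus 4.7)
The plan is to route everything through the rational Foulkes expansion. Given $\thc\in\LCone$, property (b) in the Introduction gives the unique decomposition
\[\thc=\sum_{i=0}^{n-1} r_i\fc_i,\qquad r_i=\frac{\langle\thc,\ec_i\rangle}{\ec_i(1)}.\]
Because $\thc$ is a genuine character and $\ec_i\in\Irr(S_n)$, each $\langle\thc,\ec_i\rangle$ is the multiplicity of $\ec_i$ in $\thc$, hence a non-negative integer, and $\ec_i(1)>0$, so $r_i\geq 0$. This non-negativity is the one honest input needed to get the explicit formula off the ground.

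With $r_i\geq 0$, I would define $\thc_\FCone:=\sum_i\lfloor r_i\rfloor\fc_i$ and $\thc_\P:=\sum_i\{r_i\}\fc_i$, giving $\thc=\thc_\FCone+\thc_\P$ with $\thc_\FCone\in\FCone$ by construction and $\thc_\P\in\P$ since its coefficients lie in $[0,1)$. The one thing to verify is that $\thc_\P\in\CLatt\cap\P$: but $\thc_\P=\thc-\thc_\FCone$ is a difference of characters, hence a virtual character, so $\thc_\P\in\CLatt\cap\P$, and then Lemma \ref{P intersections} upgrades this to $\thc_\P\in\LCone\cap\P$.

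For uniqueness, suppose $\thc_\FCone+\thc_\P=\thc'_\FCone+\thc'_\P$ are two decompositions of the required form. Then $\thc_\FCone-\thc'_\FCone=\thc'_\P-\thc_\P$ expands in the Foulkes basis with integer coefficients on the left side (from $\FCone-\FCone\subset\FLatt$) and coefficients in $(-1,1)$ on the right side (from $\P-\P$). The Foulkes characters are linearly independent by (b), so every coefficient must vanish. Conversely, any $\thc_\FCone+\thc_\P$ with $\thc_\FCone\in\FCone$ and $\thc_\P\in\CLatt\cap\P$ is a sum of two characters in $\CF_\ell(S_n)$, using Lemma \ref{P intersections} again to see that $\thc_\P$ itself is a character, hence is an element of $\LCone$.

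I do not anticipate a serious obstacle: once non-negativity of the $r_i$ is extracted from property (b) and one invokes Lemma \ref{P intersections} to identify $\CLatt\cap\P$ with $\LCone\cap\P$, everything is a bookkeeping argument about floor and fractional parts. The only subtle step is arguably the non-negativity of $r_i$, which is the one place the hypothesis that $\thc$ is a genuine character (rather than merely a virtual character) is used.
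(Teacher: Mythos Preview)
Your proposal is correct and follows essentially the same approach as the paper: both arguments use property~(b) to extract non-negative rational Foulkes coefficients $r_i$, split via floor and fractional parts, observe that $\thc_\P=\thc-\thc_\FCone$ lies in $\CLatt\cap\P$, and invoke Lemma~\ref{P intersections}. The only cosmetic difference is that the paper phrases uniqueness by appealing to the standard fundamental-domain decomposition for the sublattice $\FLatt\subset\LLatt$, whereas you argue it directly via coefficient comparison---these are the same argument in different clothing.
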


  \begin{proof}
    By Lemma \ref{P intersections}, we have a mapping 
    \[
      \alpha: \FCone\times (\CLatt\cap \P)\to \LCone\quad\text{given by}\quad
      \alpha(x,y)=x+y.
    \]

    Each element of $\LLatt$ is uniquely expressible as an element from the sublattice $\FLatt$ plus an element from the fundamental domain $\LLatt\cap \P=\CLatt\cap \P$, and we also have the
    inclusions  
    $\FCone\subset \FLatt$ and $\LCone\subset \LLatt$, so $\alpha$ is injective.

    To show that $\alpha$ is surjective, consider a character $\thc$ that depends only on length. Then
    \[\thc=\sum_{i=0}^{n-1} r_i\fc_i,\qquad
      r_i=\frac{\langle \thc,\chi_{(n-i,1,\ldots,1)}\rangle}{\binom{n-1}{i}}\geq 0. \]
    Hence
    \[\thc=\phi+\psi,\]
    where
    \[\phi=\sum_{i=0}^{n-1} \lfloor r_i\rfloor\fc_i\in \FCone\]
    and
    \[\psi=\thc-\phi=\sum_{i=0}^{n-1} \{r_i\}\fc_i\in \CLatt\cap\P.\]
  \end{proof}

  \subsection{}
We now proceed to describe a particularly good pair of bases for the lattice $\LLatt$ and the sublattice $\FLatt$, from which we will be able to better understand the fundamental domain $\CLatt\cap \P$ for $\FLatt\subset \LLatt$.
  Specifically, we find a basis for $\LLatt$ that extends to a basis
  of $\CLatt$ and has the property that certain multiples of the basis elements form a basis for the sublattice $\FLatt$.
  
  \subsubsection{}
  The new basis for $\FLatt$ will be denoted by $\cc_0,\cc_1,\ldots,\cc_{n-1}$.

  \begin{definition}
    Define $\cc_0,\cc_1,\ldots,\cc_{n-1}\in\CF_\ell(S_n)$  by
    \begin{equation}
      \cc_i=\sum_{j=0}^{n-1}(-1)^{i-j}\binom{i+1}{i-j}(j+1)^\ell.
    \end{equation}
  \end{definition}

  Denoting by $\tc_k$ the character $(k+1)^\ell$ in $\CF_\ell(S_n)$,
  we have the following relations
  between the $\cc_i$'s,
   the $\fc_i$'s, and the $\tc_i$'s.
   
   \begin{proposition}\label{relations}
    \begin{alignat}{2}
      \fc_i    &= \sum_{j=0}^{n-1}(-1)^{i-j}\binom{n+1}{i-j} \tc_j,&\qquad
    \tc_i  &= \sum_{j=0}^{n-1} \binom{n+i-j}{i-j} \fc_j,\label{fc gamma}\\
    \cc_i &=\sum_{j=0}^{n-1} (-1)^{i-j}\binom{i+1}{i-j} \tc_j,&\qquad
    \tc_i  &= \sum_{j=0}^{n-1} \binom{i+1}{i-j} \cc_j,\label{vp gamma}\\
    \fc_i    &= \sum_{j=0}^{n-1} (-1)^{i-j}\binom{n-j-1}{i-j}\cc_j,&\qquad
    \cc_i &= \sum_{j=0}^{n-1} \binom{n-j-1}{i-j} \fc_j.\label{fc vp}
  \end{alignat}
\end{proposition}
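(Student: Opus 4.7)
The six identities form three pairs, each expressing a matrix inversion relation. My plan is to identify the pieces already available from the excerpt, prove the single genuinely new matrix inversion using a finite-difference argument, and then deduce the remaining pair by composition.

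First, I would observe that the left identity in \eqref{fc gamma} is property $f)$ of the introduction rewritten with the shorthand $\tc_j=(j+1)^\ell$, and the right identity appears already in the proof of Theorem~\ref{theorem no better} after the trivial observation $\binom{n+i-j}{n}=\binom{n+i-j}{i-j}$. The left identity in \eqref{vp gamma} is the defining formula for $\cc_i$. Thus only the right identity in \eqref{vp gamma} and the two identities in \eqref{fc vp} require proof.

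For the right identity in \eqref{vp gamma}, I would check that the lower-triangular matrices $A=\bigl[(-1)^{i-j}\binom{i+1}{i-j}\bigr]$ and $B=\bigl[\binom{i+1}{i-j}\bigr]$ are inverse to each other. The diagonal entries of $BA$ are obviously $1$. For $i>k$, after the change of variable $a=i-j$ and extending the summation upward (the added terms vanish), $(BA)_{ik}$ becomes
\[
(-1)^{i-k}\sum_{a=0}^{i+1}(-1)^{a}\binom{i+1}{a}q(a),\qquad q(a)=\binom{k+1+i-a}{k+1}.
\]
The polynomial $q$ has degree $k+1$ in $a$ and $k+1<i+1$, so the displayed sum vanishes by the standard fact that the $(i+1)$-st forward difference at $0$ annihilates every polynomial of smaller degree.

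The two identities of \eqref{fc vp} then follow by composing the first two pairs. Substituting the right side of \eqref{vp gamma} into the left side of \eqref{fc gamma} and substituting $a=j-k$ reduces the inner sum, after factoring out $(-1)^{i-k}$, to the coefficient of $x^{i-k}$ in the product $(1+x)^{n+1}\cdot(1+x)^{-(k+2)}=(1+x)^{n-k-1}$, which is $\binom{n-k-1}{i-k}$; this yields the left identity of \eqref{fc vp}. Similarly, substituting the right side of \eqref{fc gamma} into the defining formula for $\cc_i$ reduces the inner sum to the coefficient of $x^{i-k}$ in $(1+x)^{i+1}\cdot(1+x)^{-(n+1)}=(1+x)^{i-n}$, namely $\binom{i-n}{i-k}=(-1)^{i-k}\binom{n-k-1}{i-k}$, which combines with the outer prefactor $(-1)^{i-k}$ to give the right identity. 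The main obstacle is the inversion in the middle step, but the finite-difference framing reduces it to a one-line verification.
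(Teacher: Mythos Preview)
Your overall architecture matches the paper's: treat the six identities as three inverse pairs, take \eqref{fc gamma} and the left half of \eqref{vp gamma} as given, invert for the right half of \eqref{vp gamma}, and obtain \eqref{fc vp} by composition. The differences are in the execution. For the right half of \eqref{vp gamma} the paper simply cites the well-known fact that $[\binom{j}{i}]^{-1}=[(-1)^{i-j}\binom{j}{i}]$, whereas you prove it via finite differences; for \eqref{fc vp} the paper proves only the second identity by composition, collapsing the double sum via the Vandermonde-type identity $\sum_u(-1)^{u-t}\binom{n-s+u}{n}\binom{t}{u}=\binom{n-s}{n-t}$, and then obtains the first identity by observing that matrix inversion commutes with anti-diagonal transposition, while you derive both halves of \eqref{fc vp} directly by reading off coefficients in $(1+x)^{n+1}(1+x)^{-(k+2)}$ and $(1+x)^{i+1}(1+x)^{-(n+1)}$. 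Both routes are short; your generating-function bookkeeping is arguably cleaner than the paper's anti-diagonal trick, though the paper gets the Pascal inversion for free.

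One correction: in your finite-difference step the polynomial should be $q(a)=\binom{i+1-a}{k+1}$, not $\binom{k+1+i-a}{k+1}$; after the substitution $a=i-j$ the factor $\binom{j+1}{j-k}$ becomes $\binom{i-a+1}{i-a-k}=\binom{i-a+1}{k+1}$. Your subsequent claim that $q$ has degree $k+1<i+1$ is still correct with the right $q$, so the argument goes through unchanged once the typo is fixed.
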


\begin{proof}
  The first equality in \eqref{fc gamma} is the usual well-known description of $\fc_i$. The second
  equality in \eqref{fc gamma} is \eqref{psi as phi sum},
  which can alternatively be obtained by first writing
  \[\tc_i
    =
    \sum_{j=0}^{n-1}\frac{\langle\tc_i,\varepsilon_j\rangle}{\binom{n-1}{j}}
    \fc_j,\]
  where $\varepsilon_j=\chi_{(n-j,1,\ldots,1)}$, and then using that for any partition $\lambda$ of $n$,
  \begin{equation}\label{gamma hook cont}
    \langle \tc_i,\chi_\lambda\rangle = \prod_{b} \frac{i+1+c(b)}{h(b)},
  \end{equation}
  where the product is taken over all boxes $b$ in the diagram of $\lambda$ and where $c(b)$ and $h(b)$ denote the content and hook length of $b$, 
  which gives
  \begin{equation}\label{gamma hook binom}
    \frac{\langle\tc_i,\varepsilon_j\rangle}{\binom{n-1}{j}}
    =\binom{n+i-j}{i-j}.
  \end{equation}

  The first equality in \eqref{vp gamma} is the definition of $\cc_i$, and the second equality  in \eqref{vp gamma} follows from the first by using the fact that
  the inverse of the matrix
  $[\binom{j}{i}]_{1\leq i,j\leq n}$ equals $[(-1)^{i-j}\binom{j}{i}]_{1\leq i,j\leq n}$. 

  For the second equality in \eqref{fc vp}, by combining
  the first relation in \eqref{vp gamma} and the second relation in \eqref{fc gamma}, we have
  \begin{equation}\label{first vp to fc}
    \cc_i=\sum_{j=0}^{n-1}\sum_{u=0}^{n-1}(-1)^{i-u}\binom{n+u-j}{u-j}\binom{i+1}{i-u}\fc_j.
  \end{equation}
  For $1\leq s,t\leq n$, 
  \[\sum_{u=1}^{n} \binom{n-s}{n-u}\binom{t}{u}=\binom{n-s+t}{n},\]
  so, for $1\leq s,t\leq n$, 
  \begin{equation}\label{general binom}
    \sum_{u=1}^{n} (-1)^{u-t}\binom{n-s+u}{n}\binom{t}{u}=\binom{n-s}{n-t}.
  \end{equation}
  Using \eqref{general binom} in \eqref{first vp to fc}, we conclude that 
  \[\cc_i=\sum_{j=0}^{n-1} \binom{n-j-1}{i-j}\fc_j.\]
  The first equality in \eqref{fc vp} follows from the second equality in
  \eqref{fc vp} by
  using both that the inverse of $[\binom{j}{i}]_{0\leq i,j\leq n-1}$ equals 
  $[(-1)^{i-j}\binom{j}{i}]_{0\leq i,j\leq n-1}$ and  that
  matrix inversion commutes with the operation of transposing across the
  anti-diagonal, i.e.
  $A\mapsto JA^tJ$ with $J=[\delta_{i+j,n-1}]_{0\leq i,j\leq n-1}$. 
\end{proof}

\begin{theorem}\label{bases FLatt}
  The sequences $\{\fc_i\}_{i=0}^{n-1},\ \{\tc_i\}_{i=0}^{n-1},\ \{\cc_i\}_{i=0}^{n-1}$ are bases for~$\FLatt$.
\end{theorem}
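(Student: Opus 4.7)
The plan is to read off the claim directly from the transition formulas collected in Proposition \ref{relations}. By definition $\FLatt$ is the $\ZZ$-span of $\fc_0,\ldots,\fc_{n-1}$, and the $\fc_i$ are linearly independent (they form a basis of $\CF_\ell(S_n)$ over $\QQ$ by property (b) in the introduction), so $\{\fc_i\}$ is a $\ZZ$-basis of $\FLatt$. To show that $\{\tc_i\}$ and $\{\cc_i\}$ are also $\ZZ$-bases, it suffices to check that the corresponding change-of-basis matrices between each of them and $\{\fc_i\}$ are unimodular integer matrices, i.e. have determinant $\pm 1$ and integer entries.

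First I would point out that all six matrices appearing in Proposition \ref{relations} have entries in $\ZZ$, since they are binomial coefficients (using the convention $\binom{u}{v}=0$ for $v<0$ or $v>u$). Next, I would observe that each of the three matrices
\[
\bigl[\tbinom{n+i-j}{i-j}\bigr]_{0\le i,j\le n-1},\qquad
\bigl[\tbinom{i+1}{i-j}\bigr]_{0\le i,j\le n-1},\qquad
\bigl[\tbinom{n-j-1}{i-j}\bigr]_{0\le i,j\le n-1}
\]
is lower triangular with ones on the diagonal: the $(i,i)$ entries are $\binom{n}{0}=\binom{i+1}{0}=\binom{n-i-1}{0}=1$, while the $(i,j)$ entries with $i<j$ vanish because $i-j<0$. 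Hence each has determinant $1$ and therefore inverse in $\GL_n(\ZZ)$; this is consistent with the explicit inverses recorded in Proposition \ref{relations}, which likewise have integer entries.

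From the second formulas in \eqref{fc gamma} and \eqref{fc vp}, $\tc_i$ and $\cc_i$ lie in $\FLatt$, and from the first formulas in \eqref{fc gamma} and \eqref{fc vp}, each $\fc_i$ is an integer linear combination of the $\tc_j$'s and likewise of the $\cc_j$'s. Combining these with the unimodularity observed above, each of $\{\tc_i\}_{i=0}^{n-1}$ and $\{\cc_i\}_{i=0}^{n-1}$ is obtained from $\{\fc_i\}_{i=0}^{n-1}$ by an element of $\GL_n(\ZZ)$, hence is itself a $\ZZ$-basis of $\FLatt$. There is no real obstacle here; the work has already been done in Proposition \ref{relations}, and the only point to verify is the triangularity-plus-unit-diagonal structure of the three transition matrices, which is immediate from the conventions on $\binom{u}{v}$.
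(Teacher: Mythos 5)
Your proof is correct and follows essentially the same route as the paper, which likewise deduces the theorem from the transition formulas in Proposition \ref{relations}; you have simply made explicit the observation (implicit in the paper's one-line proof) that the relevant change-of-basis matrices are integral in both directions, hence unimodular. No gaps.
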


\begin{proof}
  The sequence of $\fc_i$'s is a basis for $\FLatt$, so 
  by Proposition~\ref{relations}, the sequence of $\tc_i$'s
  and the sequence of $\cc_i$'s are also bases for $\FLatt$.
\end{proof}

\begin{proposition}\label{ch vp res}
  The Frobenius characteristic $\ch(\cc_k)$ of the character 
  $\cc_k$ of $S_n$ satisfies 
    \begin{equation}\label{ch vp eq}
      \ch(\cc_k)
      =
      \sum_{{\lambda\vdash n}\atop \ell(\lambda)=k+1} \M(\lambda)h_\lambda.
    \end{equation}
  \end{proposition}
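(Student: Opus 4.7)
The plan is to expand $\ch(\tc_i)$ in the $h_\lambda$-basis and then invert the triangular change-of-basis between the $\tc_i$'s and the $\cc_j$'s provided by Proposition~\ref{relations}.

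For the first step, I use that $\tc_i$ is the character of the permutation representation of $S_n$ on $(\CC^{i+1})^{\otimes n}$. Decomposing this representation by $S_n$-orbits on $\{1,\ldots,i+1\}^n$ (indexed by compositions $(n_1,\ldots,n_{i+1})\in\NN^{i+1}$ of $n$, with point stabilizer $S_{n_1}\times\cdots\times S_{n_{i+1}}$) and using that the Frobenius characteristic of such an induced trivial representation is $h_{n_1}\cdots h_{n_{i+1}}$, then collecting compositions by the partition $\lambda$ of their nonzero entries gives
$$\ch(\tc_i)=\sum_{\lambda\vdash n}\binom{i+1}{m_1(\lambda),\ldots,m_n(\lambda),\,i+1-\ell(\lambda)}h_\lambda=\sum_{\lambda\vdash n}\binom{i+1}{\ell(\lambda)}\M(\lambda)\,h_\lambda,$$
where the multinomial is understood to vanish when $\ell(\lambda)>i+1$. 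Equivalently, this follows from the exponential-formula identity $\sum_{n\geq 0}\ch(\tc_i)=\bigl(\sum_{n\geq 0}h_n\bigr)^{i+1}$.

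Next I would group by $\ell(\lambda)$. Setting $\Psi_k=\sum_{\lambda\vdash n,\ \ell(\lambda)=k+1}\M(\lambda)h_\lambda$ and using $\binom{i+1}{\ell(\lambda)}=\binom{i+1}{i-(\ell(\lambda)-1)}$, the expansion above rewrites as
$$\ch(\tc_i)=\sum_{k=0}^{n-1}\binom{i+1}{i-k}\Psi_k.$$
On the other hand, the second relation in \eqref{vp gamma} of Proposition~\ref{relations} gives $\tc_i=\sum_{j=0}^{n-1}\binom{i+1}{i-j}\cc_j$, hence $\ch(\tc_i)=\sum_j\binom{i+1}{i-j}\ch(\cc_j)$. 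The matrix $[\binom{i+1}{i-j}]_{0\leq i,j\leq n-1}$ is lower triangular with $1$'s on the diagonal and is therefore invertible over $\ZZ$, so comparing the two expansions of $\ch(\tc_i)$ forces $\ch(\cc_k)=\Psi_k$ for each $k$, which is the asserted identity.

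I expect Step~1 to be the main obstacle: one must carefully match the combinatorics of compositions of $n$ with at most $i+1$ parts to the invariant $\M(\lambda)$ and isolate the correct multinomial factor. Once the $h_\lambda$-expansion of $\ch(\tc_i)$ is in hand, the rest is a routine reindexing followed by the triangular extraction.
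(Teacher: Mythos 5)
Your proof is correct and takes essentially the same route as the paper's: both rest on the expansion $\ch(\tc_i)=\sum_{\lambda\vdash n}\binom{i+1}{\ell(\lambda)}\M(\lambda)h_\lambda$ (which the paper simply cites from \cite[Cor.\ 8]{M2} and you rederive via the orbit decomposition of $(\CC^{i+1})^{\otimes n}$ into induced trivial modules from Young subgroups) and then transfer to $\cc_k$ through the binomial relations of Proposition~\ref{relations}. The only cosmetic difference is in the final step: the paper inverts explicitly by evaluating the alternating sum $\sum_{j}(-1)^{k+1-j}\binom{k+1}{j}\binom{j}{\ell(\lambda)}$, whereas you compare two expansions of $\ch(\tc_i)$ and appeal to the unitriangularity of $\bigl[\binom{i+1}{i-j}\bigr]$ --- the same linear algebra packaged differently.
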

  \begin{proof}
    By \cite[Cor.\ 8]{M2}, 
    \begin{equation}
      \ch(\tc_j)=\sum_{\lambda\vdash n}\binom{j+1}{\ell(\lambda)}M(\lambda) h_\lambda,
    \end{equation}
    so
    \begin{align*}
      \ch(\cc_k) &=
                  \sum_{j=0}^{n-1}
                  (-1)^{k-j} \binom{k+1}{j+1} \sum_{\lambda\vdash n}\binom{j+1}{\ell(\lambda)} \M(\lambda) h_\lambda\\
                &=\sum_{\lambda\vdash n}
                  \M(\lambda) h_\lambda
                  \sum_{j=1}^n
                  (-1)^{k+1-j} \binom{k+1}{j}\binom{j}{\ell(\lambda)}\\
                &=\sum_{{\lambda\vdash n}\atop{\ell(\lambda)=k+1}} \M(\lambda) h_\lambda.
    \end{align*}
  \end{proof}

  \subsubsection{}
  Next, we show that certain scalar multiples of the $\cc_i$
  basis for $\FLatt$ give a basis for $\LLatt$ and that this
  basis for $\LLatt$ 
  can be extended to a basis for $\CLatt$.
  
    \begin{definition}\label{omega def}
    Define $\lc_0,\lc_1,\ldots,\lc_{n-1}\in\CF_\ell(S_n)$  by
    \begin{equation}
      \lc_k=\frac{1}{d_{k+1}}\cc_k,\quad d_k=\frac{k}{\gcd(n,k)}.
    \end{equation}
  \end{definition}

  \begin{proposition}\label{omega warm up}
    $\lc_0,\lc_1,\ldots,\lc_{n-1}$ are linearly independent characters in~$\LCone$. 
  \end{proposition}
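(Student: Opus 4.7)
The plan is to show three things: each $\lc_k$ is a class function depending only on $\ell$, each $\lc_k$ is actually a character (not just a virtual character), and the collection is linearly independent.

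The first and third points are essentially immediate from what has already been established. Since $\cc_k$ is defined as a linear combination of the functions $\tc_j = (j+1)^\ell$, which manifestly depend only on $\ell$, so does $\lc_k = \cc_k / d_{k+1}$. For linear independence, Theorem \ref{bases FLatt} says that $\cc_0,\ldots,\cc_{n-1}$ is a basis for $\FLatt$, and since the $d_{k+1}$ are nonzero scalars, the sequence $\lc_0,\ldots,\lc_{n-1}$ remains linearly independent over $\QQ$.

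The main content is therefore verifying that $\lc_k$ is a character. My plan is to use the Frobenius characteristic and the divisibility result from Section 2. By Proposition \ref{ch vp res},
\[
  \ch(\cc_k) = \sum_{{\lambda\vdash n}\atop{\ell(\lambda)=k+1}} \M(\lambda) h_\lambda,
\]
so
\[
  \ch(\lc_k) = \sum_{{\lambda\vdash n}\atop{\ell(\lambda)=k+1}} \frac{\M(\lambda)}{d_{k+1}}\, h_\lambda.
\]
By Proposition \ref{gcd}, $d_{k+1} = (k+1)/\gcd(n,k+1)$ is precisely the gcd of the $\M(\lambda)$'s with $\lambda\vdash n$ and $\ell(\lambda)=k+1$, so every coefficient $\M(\lambda)/d_{k+1}$ is a non-negative integer. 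Since $h_\lambda$ is the Frobenius characteristic of the permutation character $(1_{S_\lambda})^{S_n}$ induced from the trivial character of the Young subgroup $S_\lambda$, a non-negative integer combination of $h_\lambda$'s corresponds to a genuine character of $S_n$. Hence $\lc_k \in \LCone$.

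I expect no serious obstacle here; the whole point is that Definition \ref{omega def} was engineered so that the denominator $d_{k+1}$ matches exactly the gcd computed in Proposition \ref{gcd}. The only subtlety worth emphasizing is that $\lc_k$ need not be a character merely from the $\cc_k$'s being characters and $d_{k+1}$ being an integer divisor of the coefficients in the $\fc_j$-expansion: the key fact used is the non-negativity of the $\M(\lambda)/d_{k+1}$ in the $h_\lambda$-expansion, which is what identifies $\lc_k$ as a character of $S_n$.
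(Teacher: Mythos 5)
Your proof is correct and follows the same route as the paper: linear independence is inherited from the $\cc_i$'s, and the character property comes from combining Proposition \ref{ch vp res} with the gcd computation of Proposition \ref{gcd}, exactly as in the paper's (much terser) proof. Your added remark that non-negativity of the integer coefficients in the $h_\lambda$-expansion is the essential point is a fair elaboration of what the paper leaves implicit.
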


  \begin{proof}
    The $\lc_i$'s are linearly independent because the $\cc_i$'s are
    linearly independent, and they are characters by Proposition~\ref{gcd} and Proposition~\ref{ch vp res}.
  \end{proof}
  
  \begin{theorem}\label{omega extend}
    $\lc_0,\lc_1,\ldots,\lc_{n-1}$
    can be extended to a basis of $\CLatt$.
  \end{theorem}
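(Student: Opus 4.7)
The plan is to transport the problem through the Frobenius characteristic $\ch$, which realizes $\CLatt$ as a free $\ZZ$-module whose image $V_n$ has the $h_\lambda$'s ($\lambda\vdash n$) as a $\ZZ$-basis. It will thus suffice to extend $\ch(\lc_0),\ldots,\ch(\lc_{n-1})$ to a $\ZZ$-basis of $V_n$ expressed in the $h_\lambda$ coordinates.

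Combining Proposition \ref{ch vp res} with Definition \ref{omega def},
\[\ch(\lc_k)=\sum_{{\lambda\vdash n}\atop{\ell(\lambda)=k+1}}\frac{\M(\lambda)}{d_{k+1}}\,h_\lambda,\]
and Proposition \ref{gcd} tells us that the integer coefficients $\M(\lambda)/d_{k+1}$ have overall gcd equal to $1$.

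Now I would exploit the length stratification $V_n=\bigoplus_{k=1}^n H_k$, where $H_k$ is the $\ZZ$-span of those $h_\lambda$ with $\ell(\lambda)=k$. Each $\ch(\lc_{k-1})$ sits inside $H_k$ and, by the gcd computation just above, is a primitive vector there. Since any primitive vector in a finitely generated free $\ZZ$-module extends to a $\ZZ$-basis of that module, I can enlarge $\{\ch(\lc_{k-1})\}$ to a $\ZZ$-basis of $H_k$ for each $k=1,\ldots,n$. Concatenating these choices yields a $\ZZ$-basis of $V_n$ containing $\ch(\lc_0),\ldots,\ch(\lc_{n-1})$, and pulling back through $\ch^{-1}$ gives the desired extension inside $\CLatt$.

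Essentially the entire argument funnels through Proposition \ref{gcd}: once each $\ch(\lc_k)$ is known to be primitive inside its own length stratum, the direct-sum decomposition $V_n=\bigoplus H_k$ fully decouples the extension problem, so no cross-stratum interactions need to be controlled. The hard part would otherwise have been analyzing the Smith normal form of the full coefficient matrix of $\lc_0,\ldots,\lc_{n-1}$ against the $h_\lambda$-basis, but the length filtration circumvents that entirely.
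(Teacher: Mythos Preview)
Your proof is correct and follows essentially the same route as the paper's own argument: both use the length stratification of the $h_\lambda$-basis, invoke Proposition~\ref{ch vp res} together with Proposition~\ref{gcd} to see that each $\lc_k$ (respectively $\ch(\lc_k)$) is primitive in its stratum, and then extend each primitive vector to a basis of its stratum before concatenating. The only cosmetic difference is that the paper works directly in $\CLatt$ via the basis $\ch^{-1}(h_\lambda)$ and spells out the ``primitive vector extends to a basis'' step by an explicit induction, whereas you transport through $\ch$ and invoke that extension fact as standard.
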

  
  \begin{proof}
    For $0\leq k \leq n-1$, let
    \[
      B_k=\{\ch^{-1}(h_\lambda)\mid \lambda\vdash n,\ \ell(\lambda)=k+1\},
    \] so $B_0, B_1,\ldots, B_{n-1}$ are pairwise
    disjoint and $B=\bigcup_{k=0}^{n-1} B_k$ is a basis for $\CLatt$.
    
    By Proposition~\ref{gcd} and Proposition~\ref{ch vp res}, for $0\leq k\leq n-1$,
    \begin{equation}\label{Bk sum}
      \lc_k=\sum_{\xi\in B_k} a_\xi \xi
    \end{equation}
    with coefficients $a_\xi$ that are positive integers satisfying
    \begin{equation}\label{Bk condition}
      \gcd\{a_\xi \mid \xi\in B_k\}=1.
    \end{equation}
        
    Now, if $S$ is any non-empty subset of $B$,
    and if $\chi$ is any character of shape 
    \[
      \chi=\sum_{\xi\in S} a_\xi \xi
    \]
    with positive integer coefficients $a_\xi$ that satisfy 
    $
      \gcd\{a_\xi\mid \xi\in S\}=1,
    $
    then
    \begin{equation}\label{S claim}
      \text{$\chi$ can be extended to a basis of $\textstyle{\bigoplus}_{\xi\in S}\ZZ\xi$,}
    \end{equation}
    which we establish by induction on $|S|$ as follows.
    The $|S|=1$ case is trivial. 
    Assuming the statement for all subsets of $B$ with exactly 
    $k$ elements, 
    suppose $\chi$ is of the described shape as a sum over a subset 
    $S\subset B$ with $|S|=k+1$.
    Let $T=S\smallsetminus\{\eta\}$ for some $\eta\in S$ and write 
    \[\chi=a\eta+b\theta,\]
    where $a$ is the coefficient of $\eta$ in $\chi$ and 
    $b=\gcd\{\text{coeff.\ of $\xi$ in $\chi$}\mid \xi\in T\}$,
    so $\gcd(a,b)=1$.
    Writing $au+bv=1$ for some $u,v\in \ZZ$,
    let $\psi=u\theta-v\eta$. Then
    \[
      \eta=u\chi - b\psi
      \quad\text{and}\quad
      \theta=v\chi+a\psi,
    \]
    so the pair $\chi,\psi$ is a basis for
    $\ZZ\eta\oplus \ZZ\theta$.
    By hypothesis, $\theta$ can be extended to a basis of
    $\bigoplus_{\xi\in T}\ZZ \xi$.
    Therefore, $\chi$ can be extended to a basis of
    $\bigoplus_{\xi\in S}\ZZ \xi$.

    By \eqref{Bk sum}, \eqref{Bk condition}, and \eqref{S claim}, 
    for $0\leq k\leq n-1$, 
    $\lc_k$
    can be extended to a basis of
    $\bigoplus_{\xi\in B_k} \ZZ \xi$. 
    So $\lc_0,\lc_1,\ldots,\lc_{n-1}$ can be extended to a basis
    of $\CLatt$.
  \end{proof}
  
  \begin{theorem}\label{omega basis LLatt}
 $\lc_0,\lc_1,\ldots,\lc_{n-1}$ is a basis for $\LLatt$.
  \end{theorem}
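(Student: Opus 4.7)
The plan is to combine the two previous results about $\lc_0,\ldots,\lc_{n-1}$---that they are linearly independent characters in $\LCone\subset\LLatt$ (Proposition \ref{omega warm up}) and that they extend to a $\ZZ$-basis of $\CLatt$ (Theorem \ref{omega extend})---to pin down that they in fact $\ZZ$-span the intermediate lattice $\LLatt$.

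First I would observe that $\LLatt$ has $\QQ$-rank $n$: indeed, by Proposition \ref{integral in rational span}, $\LLatt\otimes\QQ$ is contained in the $\QQ$-span of $\fc_0,\ldots,\fc_{n-1}$, and it contains $\FLatt$, which already has rank $n$. Since the $\lc_i$'s are linearly independent and lie in this $n$-dimensional rational space, they form a $\QQ$-basis of $\LLatt\otimes\QQ$. In particular, any $\thc\in\LLatt$ admits a unique expression
\[
\thc=\sum_{i=0}^{n-1} r_i\lc_i,\qquad r_i\in\QQ.
\]
The task is to upgrade $r_i\in\QQ$ to $r_i\in\ZZ$.

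This is where Theorem \ref{omega extend} is applied. By that theorem, we may extend $\lc_0,\ldots,\lc_{n-1}$ to a $\ZZ$-basis $\lc_0,\ldots,\lc_{n-1},\xi_1,\ldots,\xi_m$ of $\CLatt$. Since $\thc\in\LLatt\subset\CLatt$, it also admits a unique expression in this $\ZZ$-basis, say
\[
\thc=\sum_{i=0}^{n-1} a_i\lc_i+\sum_{j=1}^{m} b_j\xi_j,\qquad a_i,b_j\in\ZZ.
\]
But viewed as a $\QQ$-linear combination in the $\QQ$-basis $\lc_0,\ldots,\lc_{n-1},\xi_1,\ldots,\xi_m$ of $\CLatt\otimes\QQ$, the two expressions must coincide, forcing $r_i=a_i\in\ZZ$ and $b_j=0$. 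Hence every $\thc\in\LLatt$ is a $\ZZ$-linear combination of the $\lc_i$'s, completing the proof that $\lc_0,\ldots,\lc_{n-1}$ is a $\ZZ$-basis of $\LLatt$.

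The main obstacle is really not in this argument but in Theorem \ref{omega extend}, which was the heavy lifting: once we know the $\lc_i$'s are primitive enough to extend to a basis of the larger lattice $\CLatt$, saturation inside the intermediate lattice $\LLatt$ is automatic.
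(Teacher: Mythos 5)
Your proposal is correct and follows essentially the same route as the paper: the paper likewise reduces to showing that $\CLatt$ intersected with the rational span of the $\lc_i$'s equals their integer span, citing Theorem \ref{omega extend} for exactly the saturation step you spell out via coefficient comparison in the extended basis. Your write-up merely unpacks the paper's terser final sentence.
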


  \begin{proof}
    By Proposition \ref{omega warm up},
    the $\lc_i$'s are linearly independent, so it remains
    to show that they span $\LLatt$. 
    By Theorem \ref{bases FLatt},
    the $\fc_i$'s and the $\cc_i$'s are bases of $\FLatt$,
    so they have the same rational span, which, by Definition \ref{omega def}, 
    must be the rational span of the $\lc_i$'s.
    So by Proposition \ref{integral in rational span},
    \begin{equation}\label{LLatt as rat omega span}
      \LLatt
      =
      \CLatt\cap \{r_0\lc_0+r_1\lc_1+\ldots+r_{n-1}\lc_{n-1} \mid r_i\in\QQ\}.
    \end{equation}
    By Theorem \ref{omega extend},
    the right-hand side of \eqref{LLatt as rat omega span} is the set of
    integer linear combinations of $\lc_0,\lc_1,\ldots,\lc_{n-1}$. 
    So $\lc_0,\lc_1,\ldots,\lc_{n-1}$ is a basis of~$\LLatt$. 
\end{proof}

\subsection{Proofs of Theorems \ref{characters}--\ref{clear denom}}
From Theorem~\ref{bases FLatt} and Theorem~\ref{omega basis LLatt} we will obtain an explicit description of the characters of $S_n$ that lie 
in the fundamental parallelepiped~$\P$. We start by reading off
the number of these characters.

\begin{theorem}\label{count}
  \[\LLatt/\FLatt\cong \ZZ/d_1\ZZ\times \ZZ/d_2\ZZ\times\ldots\times \ZZ/d_n\ZZ,\quad d_k=\frac{k}{\gcd(n,k)}.\]
  In particular,
\[[\LLatt : \FLatt]=|\CLatt\cap\P|=d_1d_2\ldots d_n=\frac{n!}{\gcd(1,n)\gcd(2,n)\ldots\gcd(n,n)}.\]
\end{theorem}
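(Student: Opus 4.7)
The plan is to read the theorem directly off the two bases produced in Theorem~\ref{bases FLatt} and Theorem~\ref{omega basis LLatt}. By Theorem~\ref{bases FLatt}, $\cc_0,\cc_1,\ldots,\cc_{n-1}$ is a $\ZZ$-basis of $\FLatt$; by Theorem~\ref{omega basis LLatt}, $\lc_0,\lc_1,\ldots,\lc_{n-1}$ is a $\ZZ$-basis of $\LLatt$; and by Definition~\ref{omega def}, these bases are related by the diagonal rescaling $\cc_k = d_{k+1}\lc_k$. Consequently, the inclusion $\FLatt\hookrightarrow \LLatt$ is, in these bases, the diagonal map with entries $d_1,d_2,\ldots,d_n$.

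From this I would conclude immediately that
\[
  \LLatt/\FLatt
  \;=\;
  \bigoplus_{k=0}^{n-1} \ZZ\lc_k\Big/\ZZ(d_{k+1}\lc_k)
  \;\cong\;
  \bigoplus_{k=1}^{n} \ZZ/d_k\ZZ,
\]
establishing the first assertion. Taking cardinalities gives $[\LLatt:\FLatt] = d_1 d_2\cdots d_n$, and expanding $d_k = k/\gcd(n,k)$ yields $n!/\bigl(\gcd(1,n)\gcd(2,n)\cdots\gcd(n,n)\bigr)$.

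For the equality $[\LLatt:\FLatt] = |\CLatt\cap\P|$, the key input is Lemma~\ref{P intersections}, which identifies $\CLatt\cap\P$ with $\LLatt\cap\P$. Since $\P$ is the fundamental parallelepiped of $\FLatt$ inside the rational span of the $\fc_i$'s, and $\LLatt$ contains $\FLatt$ as a sublattice of finite index in this same rational span, each coset of $\FLatt$ in $\LLatt$ has a unique representative in $\P$; hence $|\LLatt\cap\P| = [\LLatt:\FLatt]$.

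There is no real obstacle here: all the hard work was absorbed into showing that the rescaled $\lc_k$'s actually form a $\ZZ$-basis of $\LLatt$ (Theorem~\ref{omega basis LLatt}, which rested on the gcd computation in Proposition~\ref{gcd} and the extension argument in Theorem~\ref{omega extend}). Given those, Theorem~\ref{count} is a bookkeeping consequence of the diagonal change-of-basis matrix $\diag(d_1,\ldots,d_n)$ between $\FLatt$ and $\LLatt$.
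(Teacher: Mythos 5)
Your proposal is correct and takes essentially the same route as the paper, whose proof of this theorem is just the one-line citation of Theorem~\ref{bases FLatt}, Theorem~\ref{omega basis LLatt}, and the relation $\cc_k=d_{k+1}\lc_k$; you have simply spelled out the diagonal change-of-basis argument and the identification $|\CLatt\cap\P|=[\LLatt:\FLatt]$ via Lemma~\ref{P intersections} that the paper leaves implicit.
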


\begin{proof}[Proof of Theorem \ref{count} and Theorem \ref{piped count}]
  By Theorem~\ref{bases FLatt}, Theorem~\ref{omega basis LLatt},
  and the relation $\cc_k=d_{k+1}\lc_k$ for $0\leq k\leq n-1$.
\end{proof}

\begin{proof}[Proof of Theorem~\ref{clear denom}]
  From Theorem \ref{count}, we have
  \[\sigma_n=\lcm(d_1,d_2,\ldots,d_n),\quad d_k=\frac{k}{\gcd(n,k)}.\]
  So
  \begin{align*}
    \sigma_n
       &=\frac{1}{n}\lcm\left(\frac{n}{\gcd(1,n)},\frac{2n}{\gcd(2,n)},
         \ldots,\frac{n^2}{\gcd(n,n)}\right)\\
       &=\frac{1}{n}\lcm(\lcm(1,n),\lcm(2,n),\ldots,\lcm(n,n))\\
       &=\frac{\lcm(1,2,\ldots,n)}{n}.
  \end{align*}
\end{proof}

\begin{theorem}\label{fund domain}
  The elements of $\CLatt\cap \P$ are
  \[\thc_{\a}=\tilde{\a}_0\fc_0+\tilde{\a}_1\fc_1+\ldots+\tilde{\a}_{n-1}\fc_{n-1},\quad
    \a\in \NN^n,\quad 0\leq \a_k<d_{k+1},\]
where 
\[\tilde{\a}_k=\left\{\sum_{j=0}^{n-1}\binom{n-k-1}{j-k}\frac{\a_j}{d_{j+1}}\right\},\quad d_k=\frac{k}{\gcd(n,k)}.\]
\end{theorem}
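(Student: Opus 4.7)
The plan is to parametrize $\CLatt\cap\P$ by reducing elements of $\LLatt$ modulo $\FLatt$ and then reading off the resulting Foulkes coefficients. By Lemma~\ref{P intersections}, $\CLatt\cap\P=\LLatt\cap\P$, and by Proposition~\ref{L cone decomp}, each coset of $\FLatt$ in $\LLatt$ has a unique representative in $\P$, obtained by replacing every Foulkes coefficient by its fractional part. So it suffices to (a) enumerate the cosets of $\FLatt$ in $\LLatt$ using the bases developed above, and (b) carry out the associated change of basis back to the $\fc_i$'s.

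By Theorem~\ref{omega basis LLatt}, each $\thc\in\LLatt$ is uniquely $\thc=\sum_k\a_k\lc_k$ with $\a_k\in\ZZ$, and by Theorem~\ref{bases FLatt} together with Definition~\ref{omega def}, the $\cc_k=d_{k+1}\lc_k$ form a basis for $\FLatt$. Hence $\LLatt/\FLatt\cong\prod_k\ZZ/d_{k+1}\ZZ$ and each coset has exactly one representative $\sum_k\a_k\lc_k$ with $0\leq\a_k<d_{k+1}$. Rewriting this representative as $\sum_k\frac{\a_k}{d_{k+1}}\cc_k$ and substituting the expansion $\cc_k=\sum_{j=0}^{n-1}\binom{n-j-1}{k-j}\fc_j$ from Proposition~\ref{relations}, I get
\[
\sum_{k=0}^{n-1}\a_k\lc_k=\sum_{k=0}^{n-1}\Bigl(\sum_{j=0}^{n-1}\binom{n-k-1}{j-k}\frac{\a_j}{d_{j+1}}\Bigr)\fc_k
\]
after relabeling. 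Applying the fractional-part operation to each Foulkes coefficient, as dictated by Proposition~\ref{L cone decomp}, yields precisely the coefficients $\tilde{\a}_k$ in the statement, so the map $\a\mapsto\thc_\a$ lands in $\CLatt\cap\P$ with the claimed formula.

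For the bijection, distinct tuples $\a$ and $\b$ in $\prod_k\{0,\dots,d_{k+1}-1\}$ give distinct cosets $\sum_k\a_k\lc_k+\FLatt$ and $\sum_k\b_k\lc_k+\FLatt$, hence distinct representatives in $\P$; this produces $d_1d_2\cdots d_n$ distinct elements, which by Theorem~\ref{count} (already established via the same bases) exhausts $\CLatt\cap\P$. The only real work is the index bookkeeping in the $\cc\to\fc$ change of basis, and this poses no difficulty because all arithmetic structure of $\LLatt/\FLatt$ was already encoded in Theorem~\ref{omega basis LLatt}.
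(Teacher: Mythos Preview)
Your proof is correct and follows essentially the same route as the paper: enumerate coset representatives of $\FLatt$ in $\LLatt$ via the $\lc_k$-basis with $0\le\a_k<d_{k+1}$, convert to Foulkes coordinates using $\cc_k=\sum_j\binom{n-j-1}{k-j}\fc_j$, and then take fractional parts to land in $\P$. The only cosmetic difference is that you invoke Lemma~\ref{P intersections} and Proposition~\ref{L cone decomp} explicitly for the fractional-part reduction, whereas the paper just observes directly that $\xi_\a+\FLatt=\thc_\a+\FLatt$ with $\thc_\a\in\P$.
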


\begin{proof}
  Let
  \[\A=\{(\a_0,\a_1,\ldots,\a_{n-1})^t\in\NN^n \mid 0\leq \a_k<d_{k+1}\}.\]
  Then by Theorem~\ref{bases FLatt}, Theorem~\ref{omega basis LLatt},
  and the relation $\cc_k=d_{k+1}\lc_k$, 
  the characters
  \[\xi_\a=\a_0\lc_0+\a_1\lc_1+\ldots+\a_{n-1}\lc_{n-1},\quad \a\in\A,\]
  form a complete set of
  pairwise distinct representatives
  for the cosets in $\LLatt/\FLatt$.
  Using the second equality in \eqref{fc vp},
  \[\xi_\a=\hat{\a}_0\fc_0+\hat{\a}_1\fc_1+\ldots+\hat{\a}_{n-1}\fc_{n-1},\quad \a\in\A,\]
  where
  \[\hat{\a}_k=\sum_{j=0}^{n-1}\binom{n-k-1}{j-k}\frac{\a_j}{d_{j+1}}.\]
  So 
  \[\xi_\a+\FLatt=\thc_\a+\FLatt,\quad \a\in\A,\]
  and hence 
  the $\thc_\a$ with $\a\in\A$
  also form a complete set of pairwise distinct representatives
  for the cosets in $\LLatt/\FLatt$.
  Since $\thc_\a\in\P$ for each $\a\in\A$, we conclude
  that the $\thc_\a$'s
  are precisely the pairwise distinct elements of ${\CLatt\cap \P}$. 
\end{proof}

\begin{proof}[Proof of Theorem~\ref{characters}]
By Theorem \ref{fund domain} and Proposition \ref{L cone decomp}.
\end{proof}

\end{document}